\documentclass[a4paper,12pt,reqno]{amsart}
\usepackage{amsmath}
\usepackage{graphicx} 
\usepackage{amssymb} 
\usepackage{tikz}
\usetikzlibrary{cd}
\usetikzlibrary{arrows.meta}
\usepackage{geometry}
\usepackage{xcolor}
\usepackage{float}
\usepackage{hyperref}
\usepackage{esint}
\usepackage{subcaption}
\usepackage{ upgreek }
\usepackage[backend=biber,style=ieee,sorting=none]{biblatex}
\usepackage{mathrsfs}
\usepackage{booktabs}
\usepackage{array}
\usepackage{seqsplit}

\newcommand{\dashedhrule}{%
  \noindent\tikz{\draw[dash pattern=on 4pt off 3pt, line width=0.4pt] (0,0) -- (\linewidth,0);}%
}

\newcommand{\constentry}[4]{%
  \noindent $p = #1$\par
  \noindent $N = #2$\par
  \noindent $\mathrm{const}(p) = $\seqsplit{#3}\par
  \noindent\textbf{Prime factorization:} #4\par
  \vspace{0.5em}
  \dashedhrule
  \vspace{0.5em}
}
\newcommand{\dentry}[3]{%
  \noindent $p = #1$\par
    \noindent $N = #2$\par
      \noindent $D(p) = $\seqsplit{#3}\par
        \vspace{0.5em}
          \dashedhrule
            \vspace{0.5em}
            }
\hypersetup{
    colorlinks=true,
    linkcolor=blue,     
    citecolor=blue,     
    urlcolor=blue       
}
\newtheorem{theorem}{Theorem}[section]
\newtheorem{corollary}{Corollary}[theorem]
\newtheorem{lemma}[theorem]{Lemma}
\theoremstyle{definition}
\newtheorem*{convention}{Convention}
\newtheorem{remark}{Remark}
\newtheorem{conjecture}{Conjecture}
\newtheorem{notation}{Notation}
\newtheorem{example}{Example}[section]
\newtheorem{claim}{Claim}[section]
\DeclareMathOperator\Wronsk{Wronsk}

\title[Wronskian Identity for Iterated Differential Operators]
{The alternating compositions of weighted differential operators yield The weights' Wronskian with which constant\,?}
\author[K.\,C.\,Shah, A.\,V.\,Kiselev]{%
  Kian C.\ Shah
  \and
  Arthemy V.\ Kiselev
}\thanks{\textit{Address:} Bernoulli Institute for Mathematics,
  Computer Science and Artificial Intelligence, University of Groningen,
  P.O.\ Box 407, 9700\,AK Groningen, The Netherlands.}\thanks{}
  \dedicatory{\textup{Based on the talk by the first author at the \textsc{Group36} colloquium on group theoretic methods in Physics (13--17 July 2026, Valladolid, Spain) and poster presentation at the 17th Algorithmic Number Theory symposium (6--10 July 2026, Groningen, NL), and on the talks by the second author at the Prague Mathematical Physics Seminar in the Charles University (7 May 2026, Prague, CZ), at the Mathematics seminar in the IH\'ES (13 May 2026, Bures\/-\/sur\/-\/Yvette, France), and at the Open Problems session within the ISNMP 2nd conference on Nonlinear Mathematical Physics (28 June -- 4 July 2026, Bad Ems, Germany).}}

\subjclass[2010]{15A15, 05E18, 05A10 (primary); 05E16, 05A16 (secondary)}
\keywords{Wronskian determinant, differential operator, alternating composition, late-growing permutations, number sequences}

\date{\today}
\begin{document}
\begin{abstract}
The alternated composition of $N=2p$ differential operators $w_j(x)\,\partial_x^p$ of strict order~$p$ on the line $\mathbb{R}\ni x$ is again a differential operator of strict order~$p$; its coefficient is the constant $\mathrm{const}(p)$, depending only on the arity~$N$, times the Wronskian determinant of the originally taken coefficients $w_1,\dots,w_N$. The case $p=1$ of the Lie bracket for two vector fields fixes $\mathrm{const}(1)=1$, and $\mathrm{const}(2)=2$ is found easily by hand; $\mathrm{const}(3)=90$ can still be obtained symbolically. The problem is to determine $\mathrm{const}(p\geqslant4)$.

We compute $\mathrm{const}(p)$ exactly for all $p\leqslant14$ -- a 241-digit integer at $p=14$ -- and record the resulting integer sequence as OEIS A392714. We prove that $v_p(\mathrm{const}(p))\geqslant p-1$ for every prime $p$, matching the exact equality observed numerically throughout our range, and conjecture that this equality holds in general.

We show that $\log\mathrm{const}(p)$ grows like $\alpha p^2\log p$, with the leading coefficient close to $2$, nearly saturating the bound $p^2\log p\,(1+O(1/\log p))\leqslant\log\mathrm{const}(p)\leqslant2p^2\log p\,(1+O(1/\log p))$ obtained by O.~Zaboronski (private communication). A naturally arising reduced constant is found to decay to zero super-exponentially rather than grow, a direct consequence of this near-saturation.
\end{abstract}
\maketitle{}

\section{Introduction}
\label{sec:intro}
\noindent Let $p \in \mathbb{Z}_{\geq 1}$, and set $N = 2p$; let $w_1, \dots, w_N$ be the coefficients of differential operators of strict order \(p\) on the interval $I \subseteq \mathbb{R}$, with coordinate \(x\). For any smooth function \(f\in C^\infty(\mathbb{R}) \), define the \emph{alternating composition} of these differential operators:
\begin{equation}\label{eq:opdef}
  \mathcal{A}_p[w_1,\dots,w_N](f) \;:=\; 
  \sum_{\sigma \in S_N} (-1)^\sigma\, 
  w_{\sigma(1)}\partial_x^p \circ w_{\sigma(2)}\partial_x^p 
  \circ \cdots \circ w_{\sigma(N)}\partial_x^p\,(f(x)),
\end{equation}
where the sum runs over all permutations $\sigma$ of the set $\{1,\dots,N\}$ and $(-1)^\sigma$ denotes the sign of~$\sigma$. 
\begin{theorem}[{\cite{Kiselev2005AssociativeWronskians},\cite{Kiselev_2025}}]\label{thm:main}
For any smooth function $f$ and weight functions $w_1, \dots, w_N$, 
\begin{equation}
    \label{eq:main}
    \mathcal{A}_p[w_1,\dots,w_N](f) 
  \;=\; \mathrm{const}(p) \cdot 
  \mathrm{Wronsk}(w_1, \dots, w_N) \cdot \partial_x^p(f(x)),
\end{equation}
where $\mathrm{const}(p)$ is a universal constant, independent of $f$ and the weight functions.
\end{theorem}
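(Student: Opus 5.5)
Expanding the composition by the Leibniz rule will show directly that $\mathcal{A}_p[w_1,\dots,w_N]$ is a differential operator whose coefficients are universal linear combinations of the multilinear expressions $w_{\sigma(1)}^{(k_1)}\cdots w_{\sigma(N)}^{(k_N)}$. Concretely, moving all derivatives to the right, each term of $w_{\sigma(1)}\partial^p\circ\cdots\circ w_{\sigma(N)}\partial^p f$ is a monomial $\prod_j w_{\sigma(j)}^{(k_j)}\cdot f^{(m)}$ with $k_1=0$ (the first weight is never differentiated), $k_j\leqslant (j-1)p$, and $\sum_j k_j + m = Np$. The integer coefficient depends only on the tuple $(k_1,\dots,k_N)$, which is recorded by position $j$, not by the label $\sigma(j)$. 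Summing over $\sigma$ with signs, the coefficient of $f^{(m)}$ becomes
\[
\sum_{(k_1,\dots,k_N)} c_{k}\,\sum_{\sigma}(-1)^\sigma \prod_j w_{\sigma(j)}^{(k_j)} \;=\;\sum_{k} c_k\,\det\bigl(w_i^{(k_j)}\bigr)_{i,j}.
\]
These determinants vanish unless the $k_j$ are pairwise distinct. So only tuples of distinct orders survive, and each yields a generalized Wronskian-type determinant.

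The second step is a counting argument that forces $m=p$ and the orders $\{k_j\}=\{0,1,\dots,N-1\}$. Distinct nonnegative $k_j$ give $\sum k_j\geqslant 0+1+\cdots+(N-1)=p(2p-1)$. Hence $m = Np-\sum k_j\leqslant 2p^2-2p^2+p=p$. On the other hand, the rightmost factor is $w_{\sigma(N)}\partial^p f$, so every term carries at least $p$ derivatives on $f$, giving $m\geqslant p$. Therefore $m=p$ exactly, and equality in the sum forces $\{k_j\}=\{0,\dots,N-1\}$ as sets. Each surviving determinant is then $\pm\det(w_i^{(l)})_{i,\,0\le l\le N-1}=\pm\mathrm{Wronsk}(w_1,\dots,w_N)$, with sign that of the permutation sorting $(k_j)$. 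Summing gives $\mathcal{A}_p(f)=C\cdot\mathrm{Wronsk}\cdot f^{(p)}$. Here $C=\sum_{\pi}\mathrm{sgn}(\pi)\,c_{\pi}$ runs over the arrangements $\pi$ of $\{0,\dots,N-1\}$ into positions with $\pi(1)=0$ and $\pi(j)\leqslant (j-1)p$. This $C$ is a sum of products of binomial coefficients depending only on $p$, so it is the universal constant $\mathrm{const}(p)$.

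The one step needing care is the bookkeeping of the Leibniz coefficients $c_k$. We must make sure that the coefficient of a given monomial truly depends only on the position-indexed order tuple and on $m$, and not on the weights. This holds because the expansion is purely formal: iterating $\partial^p(w\,g)=\sum_r\binom{p}{r}w^{(r)}g^{(p-r)}$ from the left distributes derivatives from each $\partial^p$ onto factors to its right. The resulting coefficient is a sum over distributions of products of binomials, identical for every $\sigma$. Once this is stated cleanly, antisymmetrization and the degree count above finish the proof. Computing $\mathrm{const}(p)$ explicitly is not needed for the theorem.
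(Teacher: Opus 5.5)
Your proof is correct and follows essentially the same route as the paper's proof in Appendix~\ref{appendixa}. Antisymmetry forces pairwise distinct derivative orders on the weights; the derivative budget $Np=2p^2$, together with $m\geqslant p$ from the innermost factor $w_{\sigma(N)}\partial_x^p f$, forces $m=p$ and the order profile $\{0,1,\dots,N-1\}$, so the surviving terms assemble into $\mathrm{Wronsk}(w_1,\dots,w_N)$ (your position-indexed packaging as $\sum_k c_k\det\bigl(w_i^{(k_j)}\bigr)$ makes the paper's swap argument and its final identification with the Wronskian more explicit).
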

The proof of this theorem has been outlined in \autoref{appendixa}. The problem which we study in this paper is the calculation of the constant's values~$\mathrm{const}(p)$ in identity~\eqref{eq:main}. 
\begin{lemma}\label{lem:order}
    In every non-vanishing term of \eqref{eq:opdef}, the function 
    $f$ appears differentiated to precisely order $p$. Consequently, 
    $\partial_x^p(f)$ factors out of the entire alternating sum, and 
    \eqref{eq:main} reduces to the following equation:
    \begin{equation}
    \label{eqn:reduced}
    \sum_{\sigma \in S_N} (-1)^\sigma\, w_{\sigma(1)}\partial_x^p \circ w_{\sigma(2)}\partial_x^p 
  \circ \cdots \circ w_{\sigma(N)}\;=\; \mathrm{const}(p) \cdot \mathrm{Wronsk}(w_1, \dots, w_N).
\end{equation}
\end{lemma}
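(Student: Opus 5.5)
The plan is to expand each composition by the Leibniz rule and show that every term in which $f$ is differentiated to order strictly greater than $p$ cancels in the alternating sum. Composing from the right, the word $w_{\sigma(1)}\partial^p\circ\cdots\circ w_{\sigma(N)}\partial^p(f)$ expands as a sum of monomials
\[
c_{k_1,\dots,k_N}\;w_{\sigma(1)}^{(k_1)}w_{\sigma(2)}^{(k_2)}\cdots w_{\sigma(N)}^{(k_N)}\,f^{(m)}.
\]
Here $k_1=0$ because the outermost weight is never differentiated. Each $k_j$ counts the derivatives landing on $w_{\sigma(j)}$ from the $j-1$ operators $\partial^p$ to its left, so $k_1+\dots+k_j\leqslant (j-1)p$. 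Total order gives $\sum_j k_j+m=Np$. Crucially, the integer coefficient $c_{k_1,\dots,k_N}$ depends only on the positions $j$ and the orders $k_j$, not on which weight sits in which slot.

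Fix the derivative order $m$ of $f$ and antisymmetrize over $\sigma$. For each multi-index $(k_1,\dots,k_N)$, the sum $\sum_\sigma(-1)^\sigma\prod_j w_{\sigma(j)}^{(k_j)}$ is the generalized Wronskian $\det\bigl(w_i^{(k_j)}\bigr)_{i,j}$. This determinant vanishes whenever two of the $k_j$ coincide. So only multi-indices with pairwise distinct entries survive, and then $\sum_j k_j\geqslant 0+1+\dots+(N-1)=N(N-1)/2=p(2p-1)$. Hence $m=Np-\sum k_j\leqslant 2p^2-2p^2+p=p$. On the other hand $m\geqslant p$ trivially, since the innermost $\partial^p$ acts on $f$ directly and derivatives only accumulate. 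Therefore $m=p$ exactly in every non-vanishing term, and moreover $\{k_1,\dots,k_N\}=\{0,1,\dots,N-1\}$.

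It follows that $\partial_x^p f$ factors out of the whole alternating sum, which leaves the operator expression on the left of \eqref{eqn:reduced} (the last $\partial^p$ having been absorbed into $f^{(p)}$). Grouping the surviving terms by the permutation $\tau$ with $k_j=\tau(j)-1$, each contributes $c_\tau\cdot(\pm\mathrm{Wronsk}(w_1,\dots,w_N))$ after reordering columns. So the reduced left-hand side is an integer multiple of the Wronskian, and the multiple depends only on $p$. This is consistent with Theorem~\ref{thm:main}, and equating coefficients of $f^{(p)}$ yields \eqref{eqn:reduced}.

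The main obstacle is the bookkeeping claim that the Leibniz coefficient $c_{k}$ is independent of $\sigma$, that is, that the expansion is "slot-wise". Only this independence justifies assembling the antisymmetrization into a determinant. I would establish it first by writing the composition explicitly as nested Leibniz sums, $c=\prod_j\binom{\cdot}{\cdot}$ over the distributions of derivatives, in which only positions enter. The remaining steps are elementary counting.
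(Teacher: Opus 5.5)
Your proof is correct and follows essentially the same route as the paper's proof of Corollary~\ref{cor:orderp} in Appendix~\ref{appendixa}: the lower bound $m\geqslant p$ from the innermost $\partial_x^p$, antisymmetry forcing pairwise distinct derivative orders on the weights, and the count $0+1+\cdots+(N-1)=p(2p-1)$ against the total of $2p^2$ derivatives. The only difference is bookkeeping: the paper indexes the orders by weight label and kills a monomial with $k_a=k_b$ via the swap $w_a\leftrightarrow w_b$ (Lemma~\ref{lem:antisym}), whereas you index by slot, use the $\sigma$-independence of the Leibniz coefficients to recognize $\det\bigl(w_i^{(k_j)}\bigr)$, and thereby make explicit the sign bookkeeping $\mathrm{const}(p)=\sum_{\tau}(-1)^{\tau}c_{\tau}$, which the paper leaves implicit.
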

For small values \(p\leq3\) we verify identity~\eqref{eq:main} by hand for \(p=1\), whence \(\mathrm{const}(1)=1\), and for \(p=2\), whence we get \(\mathrm{const}(2)=2\) (c.f.\ Example~\ref{ex:peq2} on page~\pageref{ex:peq2}). For \(p=3\), using Jets (a package for differential calculus, see \cite{BaranMarvan_Jets},\cite{Marvan2009}) in Maple, we expand the left-hand side of Equation~\eqref{eq:main} by taking the entire sum over \(6!=120\) permutations (here \(N=2\cdot3=6\) and all differential operators have order \(p=3\)); we find \(\mathrm{const}(3)=90\). The values of $\mathrm{const}(p)$ for small $p$ are summarised in Table~\ref{tab:intro}.
\begin{table}[htb]
\caption{Previously known values of $\mathrm{const}(p)$ for $p=1,2,3$. 
The case $p=1$ recovers the Lie bracket of two vector fields. 
Extended numerical data for $p \leq 6$, including the size and 
parity structure of the contributing permutation set $\Phi_p$, 
are recorded in Table~\ref{tab:constants} at the end of the paper.}
\label{tab:intro}
\centering
\renewcommand{\arraystretch}{1.4}
\begin{tabular}{cc l}
\hline\hline
$p$ \phantom{MMM} 
& $N!$\phantom{MMM} & $\mathrm{const}(p)$ \\
\hline
$1$ \phantom{MMM} 
& $2$\phantom{MMM}  & $1$\quad (Lie)  \\
$2$ \phantom{MMM} 
& $24$ \phantom{MMM} 
& $2$  \\

$3$ \phantom{MMM} 
& $720$ \phantom{MMM} 
& $90$ \\
\hline\hline
\end{tabular}
\end{table}

However simple be the definition of~\(\mathrm{const}(p)\), its calculation at \(p=4\) becomes unfeasible using Jets if one still tries a straightforward expansion of both sides in \eqref{eq:main} or \eqref{eqn:reduced}, then solving for the proportionality constant. To learn~\(\mathrm{const}(p\geq4)\), we need a new, more economical strategy.

This paper is structured as follows. In Section~\ref{sec:monomials} we choose the weight functions $w_{1},\dots,w_{N}$ to be monomials in $x$ such that the r.-h.s.\ of \eqref{eqn:reduced} becomes the product of still unknown $\mathrm{const}(p)$ and an explicitly known integer number.  
Simultaneously, our choice of the weight functions reduces the number of permutations (from $S_N$) actually contributing to the l.-h.s.\ of \eqref{eqn:reduced}; in Section~\ref{sec:algorithm} we describe the filtering and generative algorithms to build the set $\Phi_p \subsetneq S_{N=2p}$ of permutations that do contribute. Now, in Section~\ref{sec:closedform} we derive the formula of $\mathrm{const}(p)$; it amounts to the summation only over the set $\Phi_p$. We apply this formula to reach the exact values $\mathrm{const}(p \leqslant 14 )$; we present these values and more information about the sets $\Phi_p$ of `late-growing' permutations in Table \ref{tab:constants} in Section~\ref{sec:results}.

\begin{convention}
Throughout this paper, we adopt the one-line notation for permutations, in which $\sigma = (\sigma(1), \sigma(2), \dots, \sigma(N))$ records the image of each position. That is, $\sigma(k)$ denotes the index of the object occupying position $k$ after the permutation is applied. Under this convention, the identity element of $S_N$ is represented as $\sigma = (1, 2, \dots, N)$. For instance, in $S_4$, the identity $\sigma = (1,2,3,4)$ corresponds to the map
$$
(1,\, x,\, x^2,\, x^3) \;\longmapsto\; (1,\, x,\, x^2,\, x^3),
$$
while $\sigma = (4,1,2,3)$ corresponds to
$$
(1,\, x,\, x^2,\, x^3) \;\longmapsto\; (x^3,\, 1,\, x,\, x^2).
$$
\end{convention}
\section{Combinatorics of the monomial case}
\label{sec:monomials}
\noindent To make the r.h.s of Equation~\eqref{eqn:reduced} shorter and extract the constants $\mathrm{const}(p)$ explicitly, we choose specific weight functions, namely, let them be monomials:
\[
  w_n(x) = x^{n-1}, \qquad n = 1, \dots, N.
\]
\begin{remark}
    Note that $\mathrm{deg}$$(w_{\sigma(k)})=\sigma(k)-1$ whenever \(w_k=x^{k-1}\).
\end{remark}
\begin{lemma}\label{lem:wronskian}
  The Wronskian of $1, x, x^2, \dots, x^{N-1}$ equals
  \[
    \mathrm{Wronsk}(1,\, x,\, x^2,\, \dots,\, x^{N-1}) 
    \;=\; \prod_{k=0}^{N-1} k!\,.
  \]
\end{lemma}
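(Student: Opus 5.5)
The plan is to write out the Wronskian matrix explicitly and show it is upper triangular. With the convention $\mathrm{Wronsk}(w_1,\dots,w_N)=\det\bigl(\partial_x^{i}w_{j}\bigr)_{i=0,\dots,N-1;\,j=1,\dots,N}$, the monomial weights $w_j=x^{j-1}$ give the entries
\[
\partial_x^{i}\,x^{j-1}=\frac{(j-1)!}{(j-1-i)!}\,x^{\,j-1-i}\quad\text{for } i\leqslant j-1,
\]
and $\partial_x^{i}x^{j-1}=0$ for $i>j-1$, because differentiating a polynomial more times than its degree kills it. So the matrix is upper triangular, and its determinant is the product of the diagonal entries $i=j-1$.

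Each diagonal entry equals $\partial_x^{k}x^{k}=k!$ with $k=j-1$, independent of $x$. Hence
\[
\mathrm{Wronsk}(1,x,\dots,x^{N-1})=\prod_{k=0}^{N-1}k!.
\]
In particular the Wronskian is a constant, which is what makes it useful on the right-hand side of \eqref{eqn:reduced}: it contributes an explicit integer factor multiplying $\mathrm{const}(p)$.

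There is no real obstacle here. The only point needing care is consistency of conventions: whether rows index the derivative order and columns the functions, or the transpose. This does not affect the determinant, since $\det A=\det A^{\mathsf T}$. Likewise, reordering the functions in the $\sigma$-convention only changes the sign, and the lemma concerns the identity ordering.

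For a sanity check, take $N=2$: the Wronskian is $\det\begin{pmatrix}1&x\\0&1\end{pmatrix}=1=0!\,1!$, which matches the formula.
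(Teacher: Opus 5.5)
Your proposal is correct and follows the same argument as the paper: the Wronskian matrix of $1,x,\dots,x^{N-1}$ is triangular with diagonal entries $0!,1!,\dots,(N-1)!$, so its determinant is their product. Your remark that $\det A=\det A^{\mathsf T}$ makes the row/column convention irrelevant is apt. Under the paper's own convention in Appendix~\ref{appendixa}, $\det\bigl[\partial_x^{j-1}(w_i)\bigr]$, the matrix is actually lower rather than upper triangular, which changes nothing.
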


\begin{proof}
We note that the Wronskian matrix of $1, x, \dots, x^{N-1}$ is upper triangular, with diagonal entries $0!, 1!, 2!, \dots, (N-1)!$. Thus, the determinant is the product of these entries.
\end{proof}

To understand the action of the iterated operators in \eqref{eq:opdef} in the monomial case, consider two monomials $x^\alpha$ and $x^\beta$ with $\alpha,\beta\in\mathbb{Z}_{\ge 0}$. The operator $x^\beta\partial_x^p$ acts on $x^\alpha$: whenever $\alpha<p$ the result is zero, otherwise
\begin{equation}
    \label{eqn:opaction}
    x^\beta\,\partial_x^p\,(x^\alpha) 
    \;=\; x^\beta\cdot\frac{\alpha!}{(\alpha-p)!}\, x^{\alpha-p}\;=\; \frac{\alpha!}{(\alpha-p)!}\, x^{\alpha+\beta-p}.
\end{equation}
Each (when nonzero in the output) application of the weighted operator $x^\beta\partial_x^p$ multiplies the argument $x^\alpha$ by the falling factorial; the notation is
\[
\alpha^{\underline{p}}=\frac{\alpha!}{(\alpha-p)!}=\alpha(\alpha-1)\cdots(\alpha-p+1);
\]
also, the operator shifts the exponent by $\alpha\mapsto\alpha+\beta-p$.
\begin{remark}
The vanishing condition $\alpha < p$ immediately implies that not every permutation $\sigma \in S_N$ contributes to the alternating sum in \eqref{eq:main}. Any permutation for which the innermost weight function $w_{\sigma(N)} = x^{\sigma(N)-1}$ satisfies $\sigma(N) - 1 < p$ yields zero contribution; clearly, such a permutation may be discarded at the outset.
\end{remark}

\begin{example}
Let $p = 2$ and $N = 4$, so that the weight functions are 
$\{1,x,x^2,x^3\}$. Consider the permutation $\sigma = (3,1,4,2)$ (corresponding to \((1,\, x,\, x^2,\, x^3) \;\mapsto\; (x^2,\, 1,\, x^3,\, x)\)), so that $\sigma(4) = 2$ and the innermost operator acts as $\partial_x^2(w_2) = \partial_x^2(x) = 0$. The entire term arising from this permutation thus vanishes, and $\sigma$ does not contribute to the sum. More generally, any permutation with $\sigma(N) \leq p$ is eliminated by this filter; for $p=2, N=4$ this removes all $\sigma$ with $\sigma(4) \in \{1, 2\}$, 
reducing the number of permutations to be evaluated from $4! = 24$ to those with $\sigma(4) \in \{3,4\}$.
\end{example}
As illustrated by the above example, we can now shift our focus from the monomial weight functions to the permutations themselves; from now on, we work with permutations directly.
\begin{lemma}\label{lem:sigma1}
    For any contributing permutation $\sigma \in S_N$, we have $\sigma(1) = 1$, that is, the weight function $w_{\sigma(1)} = w_1 = 1$ is necessarily fixed in the leading leftmost position.
\end{lemma}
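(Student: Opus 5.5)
Since $w_1=1$ carries the smallest exponent, the plan is to track the exponent of $x$ through the composition and show the final result has degree below $p$, which is impossible for anything nonzero produced by the outermost operator. I read the contribution of $\sigma$ to the left-hand side of \eqref{eqn:reduced} as the function $w_{\sigma(1)}\partial_x^p\bigl(w_{\sigma(2)}\partial_x^p(\cdots\partial_x^p(w_{\sigma(N)})\cdots)\bigr)$. This is a single monomial, because each step is governed by \eqref{eqn:opaction}.

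\textbf{Exponent bookkeeping.} Write $d_k=\sigma(k)-1$ for the degree of $w_{\sigma(k)}$. The innermost function is $x^{d_N}$. Each application of $x^{d_k}\partial_x^p$ sends an exponent $\alpha$ to $\alpha+d_k-p$, and is nonzero only when $\alpha\geqslant p$. So if the term does not vanish, the final exponent equals
\[
\sum_{k=1}^N d_k-(N-1)p=\frac{N(N-1)}{2}-(2p-1)p=p(2p-1)-p(2p-1)=0.
\]
Hence every nonvanishing term is a constant times $x^0$. This matches the right-hand side of \eqref{eqn:reduced}, which by Lemma~\ref{lem:wronskian} is the constant $\mathrm{const}(p)\prod_{k}k!$.

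\textbf{The outermost step.} The last operator applied is $x^{d_1}\partial_x^p$. Let $\alpha$ be the exponent just before it, and note that $\alpha\geqslant p$ is required for nonvanishing. The output exponent is then $\alpha+d_1-p\geqslant d_1$. Since this output exponent must be $0$, we get $d_1\leqslant 0$, so $d_1=0$ and $\sigma(1)=1$. As a byproduct, $\alpha=p$ exactly.

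\textbf{Where care is needed.} The argument only uses the vanishing rule ``$\alpha<p$ gives zero'' at every intermediate stage, so that any surviving term passes through all the nonzero falling factorials. I will state explicitly that a term is either identically zero or equal to $x^{\text{total}}$ times a nonzero product of falling factorials. The main subtlety is then just this correct bookkeeping of the exponent sum, i.e.\ the identity $\sum_{k=0}^{N-1}k=(N-1)p$ when $N=2p$.
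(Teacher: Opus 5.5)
Your proof is correct. It shares its starting observation with the paper's proof: the outermost weight $x^{\sigma(1)-1}$ is never differentiated, so a nonvanishing term has degree at least $\sigma(1)-1$.

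Where you differ is in how you get the degree-zero constraint. The paper compares with the right-hand side of \eqref{eqn:reduced}: the Wronskian of $1,x,\dots,x^{N-1}$ is a number, so a positive-degree term ``cannot contribute''. As written, that comparison constrains only the total sum, since positive-degree terms could a priori cancel among themselves. It also presupposes Theorem~\ref{thm:main}. You instead use homogeneity: every nonvanishing term is a monomial of exponent $\sum_k(\sigma(k)-1)-(N-1)p=p(2p-1)-(2p-1)p=0$, whatever $\sigma$ is. This buys two things:
\begin{itemize}
\item Your conclusion holds term by term. Each $\sigma$ with $\sigma(1)\neq1$ gives an identically zero term, not merely terms that cancel. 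That is what the lemma literally asserts, and what is needed when $\Phi_p$ is defined as the set of permutations with nonvanishing terms.
\item Your argument is self-contained and does not lean on the main theorem.
\end{itemize}

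Your byproduct $\alpha=p$ also shows that $\sigma(1)=1$ is exactly the last exponent condition $T_{N-1}\geq 0$ of Section~\ref{sec:algorithm}, since $T_{N-1}=-(\sigma(1)-1)$. So Lemma~\ref{lem:sigma1} is already implied by that filter. The paper's version is shorter, but yours is the more rigorous of the two.
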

\begin{proof}
    There are two kinds of permutations: the ones which keep the leftmost monomial \(w_1=1\) in its first place, and those which send it away, so that a positive-degree monomial \(x^{\alpha}\) becomes the leftmost. Suppose the latter, so that it is this positive-degree monomial \(x^{\alpha}\) which itself is not differentiated (but is multiplied by a nonnegative-degree monomial resulting from all the differentiations which stood to the right of it). This yields a contribution of degree~\(\ge \alpha\) to the sum. But we know that in the r.-h.s.\ of Equation~\eqref{eqn:reduced}, the Wronskian of \(1,x,\ldots,x^{N-1}\) is a real number, i.e.\ is of polynomial degree zero. Therefore, every permutation sending \(1\) away cannot contribute to the l.-h.s.\ of \eqref{eqn:reduced}.
\end{proof}
To identify all the permutations (from the full set \(S_N\) of size \(N!\)) that do contribute to the sum in the l.-h.s.\ of \eqref{eqn:reduced}, we build an algorithm; the filter combines Lemma~\ref{lem:sigma1} and formula \eqref{eqn:opaction}.
\section{Algorithm to filter permutations and a faster generative procedure}
\label{sec:algorithm}
\noindent\textbf{Generator (ver. 1).} We start by generating the set of all $N!$ permutations of ${1, 2, \dots, N}$ by a standard recursive construction. Then, any permutation with $\sigma(1) \neq 1$ is discarded. 
The remaining permutations are tested by simulating the exponent-tracking process from right to left. Define the partial sum $$ T_k \;:=\; \sum_{i=N-k+1}^{N} \bigl(\sigma(i) - 1-p\bigr), \qquad k = 1, \dots, N-1. $$ This quantity tracks whether the exponent of $x$ remains non-negative after each successive operator application. If $T_k < 0$ for some $k$, the exponent would become negative at that stage, causing the term to vanish; this permutation is therefore rejected. Only those permutations for which $T_k \geq 0$ for all $k$ are retained. 
\begin{notation}
    We denote by \(\Phi_p\) the set of all the remaining permutations (from \(S_N\)) which pass the filter and contribute to the sum in the l.-h.s.\ of \eqref{eqn:reduced}; note \(\Phi_p \subsetneq S_N\).
\end{notation}

\noindent\textbf{Generator (ver. 2).}The same idea as above can be used to make a generative procedure, which avoids generating invalid permutations in the first place. Rather than constructing all $N!$ permutations \textit{ab initio} and filtering \textit{post hoc}, it builds permutations incrementally via backtracking, enforcing the non-negativity condition $T_k \geq 0$ at each step of the construction.

Concretely, the element $\sigma(1) = 1$ is fixed from the beginning. The remaining positions are filled from right to left: at each stage, a candidate element $R$ is appended to the current partial permutation only if the updated running sum (\verb|current_sum += (R-(p+1))|) remains non-negative. If it does not, that branch of the search tree is removed entirely, and no permutations extending that partial assignment are ever constructed. When the remaining pool of numbers $\{2,3,\cdots,N\}$ is exhausted, the completed suffix is prepended with $1$ to yield a valid permutation.

The two algorithms are equivalent in output, as they both implement the same pair of conditions. However, the approach in version 2 prunes the search tree early and avoids the $O(N!)$ memory cost of full enumeration, unlike the approach in version 1. For the purposes of obtaining the set \(\Phi_p\) (and eventually computing $\text{const}(p)$), version 2 of the algorithm is strictly preferable, and is what makes the computation possible up to $p = 5$ on modest hardware.

\begin{example}\label{ex:p1p2}
We illustrate the filtering procedure for $p=1$ and $p=2$.

\textbf{Case $p=1$, $N=2$.} The group $S_2$ consists 
of two permutations, and thus the alternating sum \eqref{eqn:reduced} reads
\[
    \sum_{\sigma \in S_2} (-1)^\sigma\, w_{\sigma(1)}\partial_x \circ 
    w_{\sigma(2)}.
\]
The two terms are as follows. For $\sigma = (1,2)$, we obtain the term $1 \cdot \partial_x(x)$, which does contribute. For $\sigma = (2,1)$, we obtain the term $x \cdot \partial_x(1)$, which does not contribute because \(\partial_x(1)=0\). This is also consistent with Lemma~\ref{lem:sigma1}, since $\sigma(1) = 2 \neq 1$, this permutation is discarded. Thus
\[
    \Phi_1 = \{(1,2)\} \subset S_2, \qquad 
    |\Phi_1| = 1 = \tfrac{1}{2}\,|S_2|.
\]

\textbf{Case $p=2$, $N=4$.} The group $S_4$ has $4! = 24$ elements. Applying Lemma~\ref{lem:sigma1} immediately reduces it to the $3! = 6$ permutations with $\sigma(1)=1$. Of these, the non-negativity condition on the exponents $T_k$ eliminates three more, leaving the valid set
\[
    \Phi_2 = \{(1,2,3,4),\ (1,2,4,3),\ (1,3,2,4)\} \subset S_4, \qquad|\Phi_2| = 3 = \tfrac{1}{8}\,|S_4|.
\]
The three discarded permutations with $\sigma(1)=1$ are $(1,3,4,2)$, $(1,4,2,3)$, and $(1,4,3,2)$, each of them produces a negative exponent at some intermediate stage and therefore contributes a zero term to the sum.
\end{example}
\begin{remark}
    The permutations that are `close to' the identity permutation $(1, 2, \dots, N)$ contribute to the sum, while permutations that are `close to' the reversed order permutation $(N, N-1, \dots, 2, 1)$ do not contribute to the sum. Indeed, permutations near the identity present the weight functions in increasing order of their degree, ensuring that each intermediate exponent remains sufficiently large to survive $p$ differentiations. On the other hand, permutations near the reversed order present lower-degree weights in the innermost operators, which has a higher chance of driving the exponent below zero.
\end{remark}
\section{A closed-form expression for \texorpdfstring{$\mathrm{const}(p)$}{const(p)}}
\label{sec:closedform}
\noindent In the previous section, we reduced the alternating sum to the summation over the filtered set $\Phi_p$, that is,
\[
\sum_{\sigma \in S_N} (-1)^\sigma\, w_{\sigma(1)}\partial_x^p \circ 
w_{\sigma(2)}\partial_x^p \circ \cdots \circ w_{\sigma(N)}
\;=\;
\sum_{\sigma \in \Phi_p \subset S_N} (-1)^\sigma\, 
w_{\sigma(1)}\partial_x^p \circ w_{\sigma(2)}\partial_x^p 
\circ \cdots \circ w_{\sigma(N)};
\]
let us emphasize that the equality holds for the special choice of monomial weights~\(
w_1 = 1, \dots, w_N = x^{N-1}
\); the set \(\Phi_p\) was constructed in reference to this choice. We now derive a closed-form expression for each term in this sum, yielding an explicit formula for $\mathrm{const}(p)$.
\begin{notation}
    For any integer \(k\) from the range \(1\le k\le N-1\), and a permutation \(\sigma\in\Phi_p\), we, by definition, put
    \begin{equation}\label{eq:Ek}
        E_k \;:=\; \sum_{j=0}^{k-1}\bigl(\sigma(N-j)-1\bigr) - (k-1)p,
    \end{equation}
    so that its falling factorial is
    \[
    E_k^{\,\underline{p}}= \frac{E_k!}{(E_k - p)!}.
    \]
\end{notation}
\begin{claim}\label{claim:closedform}
    \begin{equation}\label{eq:closedform}
        \mathrm{const}(p) \cdot \prod_{k=0}^{N-1} k! 
        \;=\; 
        \sum_{\sigma \in \Phi_p} (-1)^\sigma \prod_{k=1}^{N-1} 
        E_k^{\,\underline{p}}
    \end{equation}
\end{claim}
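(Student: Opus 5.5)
Specialize identity~\eqref{eqn:reduced} of Lemma~\ref{lem:order} to the monomial weights $w_n=x^{n-1}$. The right-hand side then equals $\mathrm{const}(p)\cdot\prod_{k=0}^{N-1}k!$ by Lemma~\ref{lem:wronskian}. So it remains to show that the left-hand side, evaluated termwise, equals $\sum_{\sigma\in\Phi_p}(-1)^\sigma\prod_{k=1}^{N-1}E_k^{\,\underline{p}}$.

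\textbf{Step 1: evaluate one term.} Fix $\sigma\in S_N$ and evaluate
\[
x^{\sigma(1)-1}\partial_x^p\circ\cdots\circ x^{\sigma(N-1)-1}\partial_x^p\bigl(x^{\sigma(N)-1}\bigr)
\]
from the inside out, by induction on the number of operators already applied. Denote by $E_k$ the exponent of $x$ that the $k$-th derivative (counted from the right) acts on. Initially $E_1=\sigma(N)-1$. By~\eqref{eqn:opaction}, applying $x^{\sigma(N-k)-1}\partial_x^p$ to $c\,x^{E_k}$ gives $c\,E_k^{\,\underline{p}}\,x^{E_k-p+\sigma(N-k)-1}$. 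The new exponent is therefore $E_{k+1}=E_k+(\sigma(N-k)-1)-p$, and unwinding this recursion gives exactly formula~\eqref{eq:Ek}. After all $N-1$ derivatives the term equals
\[
\Bigl(\prod_{k=1}^{N-1}E_k^{\,\underline{p}}\Bigr)\,x^{E_N},
\qquad E_N=\sum_{j=1}^N(j-1)-(N-1)p .
\]
This final exponent is independent of $\sigma$, and since $N=2p$ it equals $p(2p-1)-(2p-1)p=0$. So every term is a constant, consistent with Lemma~\ref{lem:sigma1}.

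\textbf{Step 2: match the vanishing terms with $\Phi_p$.} The product $\prod_k E_k^{\,\underline p}$ vanishes exactly when some $E_k<p$, read through the convention that the falling factorial of a nonnegative integer below $p$ is $0$. Note that $E_k-p=T_k$, with $T_k$ as in Generator~(ver.~1). Hence the term is nonzero if and only if $T_k\ge 0$ for all $k=1,\dots,N-1$.

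By Remark~2 and the filter definition, this is precisely the membership condition for $\Phi_p$, apart from the extra condition $\sigma(1)=1$. That extra condition is automatic: if all $T_k\ge0$, then $E_N=T_{N-1}+\sigma(1)-1=0$ forces $\sigma(1)=1$ and $T_{N-1}=0$. This reproves Lemma~\ref{lem:sigma1} directly. One subtlety needs care. Once some $E_k<p$, the term is already $0$, and the later exponents are never actually formed (they could even be negative). So I would define the product to be $0$ as soon as the first $E_k<p$ appears, and note that every factor appearing in the formula for $\sigma\in\Phi_p$ has $E_k\ge p$.

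\textbf{Step 3: assemble.} Summing the surviving terms with signs $(-1)^\sigma$ gives the right-hand side of~\eqref{eq:closedform}; equating it with the Wronskian side completes the proof. The main obstacle is purely bookkeeping: the indexing in~\eqref{eq:Ek} must be aligned with the order of application, and the identification $E_k-p=T_k$ (including the range of $k$) must be correct. I would check this against Example~\ref{ex:p1p2} for $p=2$. There, $(1,2,3,4)$ gives the factors $3^{\underline2}\cdot3^{\underline2}\cdot2^{\underline2}=72$, $(1,2,4,3)$ gives $2\cdot6\cdot2=24$, and $(1,3,2,4)$ gives $6\cdot2\cdot2=24$. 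The signed sum is $72-24-24=24$, and $24/(0!\,1!\,2!\,3!)=24/12=2=\mathrm{const}(2)$, as expected.
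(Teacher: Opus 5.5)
Your proof is correct and follows the paper's argument. You specialize to $w_n=x^{n-1}$, track the exponent from right to left so that the $k$-th application of $\partial_x^p$ contributes the factor $E_k^{\,\underline{p}}$, and equate the signed sum over $\Phi_p$ with $\mathrm{const}(p)\prod_{k=0}^{N-1}k!$. Your extra checks make explicit what the paper instead takes from Lemma~\ref{lem:sigma1} and the filter of Section~\ref{sec:algorithm}: the final exponent $E_N$ vanishes because $N=2p$, $E_k-p=T_k$, and $E_N=T_{N-1}+\sigma(1)-1=0$ forces $\sigma(1)=1$.
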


\begin{proof}
For our chosen set of monomial weights, each surviving term in the l.-h.s.\ of \eqref{eqn:reduced} is
\begin{multline*}
    \left(1\cdot\partial_x^p\circ w_{\sigma(2)}\cdot\partial_x^p\circ\dots\circ w_{\sigma(N-1)}\cdot\partial_x^p\right)(w_{\sigma(N)})\\=\left(1\cdot\partial_x^p\circ x^{\sigma(2)-1}\cdot\partial_x^p\circ\dots\circ x^{\sigma(N-1)-1}\cdot\partial_x^p\right)(x^{\sigma(N)-1}).
\end{multline*}
The r.h.s\ of formula~\eqref{eq:closedform} tracks the scalar coefficient accumulated as the composition of weighted operators acts on the monomial $x^{\sigma(N)-1}$, proceeding from right to left through the $N-1$ applications of $\partial_x^p$. Let us study this right-to-left action of differential operators in more detail.

\noindent\textbf{Rightmost action.} The rightmost operator acts on the weight $w_{\sigma(N)} = x^{\sigma(N)-1}$:
\[
    \partial_x^p\!\left(x^{\sigma(N)-1}\right) 
    = \frac{(\sigma(N)-1)!}{(\sigma(N)-1-p)!}\, x^{\sigma(N)-1-p}
    = E_1^{\,\underline{p}}\cdot x^{\sigma(N)-1-p},
\]
where $E_1 = \sigma(N)-1$. The exponent of $x$ is now $\sigma(N)-1-p$. 

\noindent\textbf{Second action.} Pre-multiplying by $w_{\sigma(N-1)} = x^{\sigma(N-1)-1}$ shifts the exponent to
\[
    (\sigma(N-1)-1) + (\sigma(N)-1-p) 
    = \sigma(N-1)+\sigma(N)-2-p = E_2,
\]
and a further application of $\partial_x^p$ yields the factor
\[
    \frac{E_2!}{(E_2-p)!} = E_2^{\,\underline{p}}.
\]
\textbf{General step.} After $k$ actions of $\partial_x^p$, the accumulated exponent of $x$ is $E_k - p$. Pre-multiplying by $w_{\sigma(N-k)} = x^{\sigma(N-k)-1}$ raises the exponent to $E_{k+1}$, and the $(k+1)$-th application of $\partial_x^p$ contributes the factor $E_{k+1}^{\,\underline{p}}$. Iterating through all the $N-1$ actions, the total scalar coefficient accumulated for the permutation $\sigma\in\Phi_p$ is
\[
    c(\sigma,p) \;=\; \prod_{k=1}^{N-1} E_k^{\,\underline{p}}.
\]
Summing over all the contributing permutations with their signs and equating it to the product of const($p$) and  the Wronskian determinant $\mathrm{Wronsk}(1,x,\dots,x^{N-1}) = \prod_{k=0}^{N-1}k!$ from Lemma~\ref{lem:wronskian} yields equality \eqref{eq:closedform}.
\end{proof}

\begin{example}
\label{ex:peq2}
We now illustrate Claim~\ref{claim:closedform} for $p=2$, $N=4$. The contributing permutations, as identified in Example~\ref{ex:p1p2}, are $$\Phi_2 = \{(1,2,3,4),\, (1,2,4,3),\, (1,3,2,4)\}.$$ We compute the coefficient $c(\sigma, 2) = \prod_{k=1}^{3} E_k^{\,\underline{2}}$ for each filtered permutation \(\sigma\in\Phi_2\):

\medskip
For $\sigma = (1,2,3,4)$, the sign is $(-1)^\sigma = +1$ and
\[
E_1 = 3,\quad E_2 = 3+2-2 = 3,\quad E_3 = 3+1-2 = 2,
\]
\[
c((1,2,3,4),\,2) = 3^{\underline{2}}\cdot 3^{\underline{2}}
\cdot 2^{\underline{2}} = 3!\cdot 3!\cdot 2!.
\]

For $\sigma = (1,2,4,3)$, the sign is $(-1)^\sigma = -1$ and
\[
E_1 = 2,\quad E_2 = 2+3-2 = 3,\quad E_3 = 3+1-2 = 2,
\]
\[
c((1,2,4,3),\,2) = 2^{\underline{2}}\cdot 3^{\underline{2}}
\cdot 2^{\underline{2}} = 3!\cdot 2!\cdot 2!.
\]

For $\sigma = (1,3,2,4)$, the sign is $(-1)^\sigma = -1$ and
\[
E_1 = 3,\quad E_2 = 3+1-2 = 2,\quad E_3 = 2+2-2 = 2,
\]
\[
c((1,3,2,4),\,2) = 3^{\underline{2}}\cdot 2^{\underline{2}}
\cdot 2^{\underline{2}} = 3!\cdot 2!\cdot 2!.
\]

\medskip
\noindent Assembling the signed sum and dividing by the Wronskian determinant $\mathrm{Wronsk}(1,x,x^2,x^3) = 3!\cdot 2!\cdot 1!$ from Lemma~\ref{lem:wronskian}, we obtain that

\begin{align*}
    \mathrm{const}(2) 
    &= \frac{\overbrace{3!\cdot 3!\cdot 2!}^{c(\sigma_1,\,2)} 
    - \overbrace{3!\cdot 2!\cdot 2!}^{c(\sigma_2,\,2)} 
    - \overbrace{3!\cdot 2!\cdot 2!}^{c(\sigma_3,\,2)}}
    {\underbrace{3!\cdot 2!\cdot 1!}_{\mathrm{Wronsk}}}
    = 3! - 2! - 2!\\[6pt]
    &= 6 - 2 - 2 \;=\; 2,
\end{align*}
in agreement with the value \(\mathrm{const}(2)=2\) recorded in Table~\ref{tab:intro}.
\end{example}
\section{An intermediate acceleration: the deviation-vector formulation}
\label{sec:deviation}
\noindent The filtering procedure of Section~\ref{sec:algorithm} associates to each permutation a binary outcome: either the permutation belongs to \(\Phi_p\) or it does not. The following deviation-vector formulation refines this viewpoint by assigning to each contributing permutation a vector of nonnegative integers; in doing so we measure its distance from the extremal case given by the identity permutation. In this sense, it builds on our qualitative condition by also measuring how strongly a contributing permutation \(\sigma\in\Phi_p\) satisfies the late-growing condition. 

The motivation to introduce this statistic on late-growing permutations and provide the reformulation below was to develop an improvement of the original algorithm. Although it yields a significant speedup and was sufficient to obtain \(\mathrm{const}(7)\), it was later superseded by a fully recursive method with memoization. We nevertheless present it because we now obtain a mathematically whole idea of the role of late-growing permutations in our computations.

\medskip
\noindent\textbf{The master vector.}
Let $\sigma_{\mathrm{id}} = (1,2,\ldots,N)$ denote the identity permutation, which always
belongs to $\Phi_p$. Substituting $\sigma_{\mathrm{id}}$ into~\eqref{eq:Ek}, the $k$-th
component of the resulting vector is
\[
    v^{\mathrm{m}}_k 
    \;:=\; E_k\!\left(\sigma_{\mathrm{id}}\right)
    \;=\; \sum_{j=0}^{k-1}(N - 1 - j) \;-\; (k-1)p
    \;=\; k(N-1) - \tfrac{k(k-1)}{2} - (k-1)p.
\]
We call $\mathbf{v}^{\mathrm{m}} = (v^{\mathrm{m}}_1, \ldots, v^{\mathrm{m}}_{N-1})$
the \emph{master vector}; it is determined solely by $p$.

\medskip
\noindent\textbf{The deviation vector.}
For any $\sigma\in\Phi_p$, define the \emph{deviation vector}
$\boldsymbol{\delta}(\sigma) = (\delta_1(\sigma),\ldots,\delta_{N-1}(\sigma))$ by
\[
    \delta_k(\sigma) 
    \;:=\; v^{\mathrm{m}}_k - E_k(\sigma)
    \;=\; \sum_{j=0}^{k-1}\bigl[(N - j) - \sigma(N-j)\bigr].
\]
Each $\delta_k(\sigma) \geq 0$ tracks how much the right-to-left running sum of $\sigma$
falls short of the identity. In particular, $\delta_k = 0$ for all $k$ if and only if
$\sigma = \sigma_{\mathrm{id}}$.

\medskip
\noindent\textbf{The product function.}
Define $F\colon \mathbb{Z}_{\geq p}^{N-1} \to \mathbb{Z}_{>0}$ by
\[
    F(\mathbf{v}) 
    \;:=\; \prod_{k=1}^{N-1} \frac{v_k!}{(v_k - p)!}
    \;=\; \prod_{k=1}^{N-1} v_k^{\,\underline{p}}.
\]

\medskip
\noindent\textbf{Reformulated claim.}
Formula~\eqref{eq:closedform} can now be written as
\begin{equation}\label{eq:deviation}
    \mathrm{const}(p) \cdot \prod_{k=0}^{N-1} k!
    \;=\;
    \sum_{\sigma\in\Phi_p} (-1)^\sigma\,
    F\!\left(\mathbf{v}^{\mathrm{m}} - \boldsymbol{\delta}(\sigma)\right).
\end{equation}
The filtering condition $T_k \geq 0$ (which defines $\Phi_p$) is precisely the condition
that $E_k(\sigma) \geq p$ for all $k$, i.e.\ $\delta_k(\sigma) \leq v^{\mathrm{m}}_k - p$.
The filter thus guarantees that all arguments of $F$ remain in the domain $\mathbb{Z}_{\geq p}$.

After this reformulation, the entire computation of $\mathrm{const}(p)$ reduces to: generate $\Phi_p$ incrementally, read off $\boldsymbol{\delta}(\sigma)$ for free during generation, evaluate $F$,
and accumulate the signed sum.
\begin{example}
  Consider $p=3$, $N=6$; we have that
\[
    \mathbf{v}^{\mathrm{m}}
    =
    (5,6,6,5,3).
\]
This vector corresponds exactly to the factorial arguments appearing in the leading (identity) term of the expansion:
\[
    +\;
    \frac{5!}{2!}
    \cdot
    \frac{6!}{3!}
    \cdot
    \frac{6!}{3!}
    \cdot
    \frac{5!}{2!}
    \cdot
    \frac{3!}{0!},
\]
associated to the permutation
\[   
    (1,2,3,4,5,6).
\]
\noindent Naturally, every admissible permutation contributes a term obtained by lowering certain entries of the master vector. For instance,
\[
    (1,2,3,4,6,5)
\]
produces
\[
    (4,6,6,5,3),
\]
and hence contributes
\[
    -\;
    \frac{4!}{1!}
    \cdot
    \frac{6!}{3!}
    \cdot
    \frac{6!}{3!}
    \cdot
    \frac{5!}{2!}
    \cdot
    \frac{3!}{0!}.
\]
As a final example,
\[
    (1,2,3,5,4,6)
\]
produces
\[
    (5,5,6,5,3),
\]
giving the contribution
\[
    -\;
    \frac{5!}{2!}
    \cdot
    \frac{5!}{2!}
    \cdot
    \frac{6!}{3!}
    \cdot
    \frac{5!}{2!}
    \cdot
    \frac{3!}{0!}.
\]
Thus we have that the corresponding deviation vectors of our examples above (except for the identity permutation) are:
\[
    \boldsymbol{\delta}(1,2,3,4,6,5)
    =
    (1,0,0,0,0),
\]
and
\[
    \boldsymbol{\delta}(1,2,3,5,4,6)
    =
    (0,1,0,0,0).
\]

\end{example}
\begin{remark}
    For small values of $p$, the `master' vectors are:
\[
p=1,\quad N=2:
\qquad
\mathbf v^{\mathrm m}=(1).
\]
\[
p=2,\quad N=4:
\qquad
\mathbf v^{\mathrm m}=(3,3,2).
\]
\[
p=3,\quad N=6:
\qquad
\mathbf v^{\mathrm m}=(5,6,6,5,3).
\]
\[
p=4,\quad N=8:
\qquad
\mathbf v^{\mathrm m}=(7,9,10,10,9,7,4).
\]
\end{remark}
\section{Initial numerical results}
\label{sec:results}
\noindent We have reduced the problem of finding the values of $\mathrm{const}(p)$ in~\eqref{eq:main} or~\eqref{eqn:reduced} to the construction of the (sub)set of permutations $\Phi_{p}\subseteq S_{N}$ (here $N= 2p$) and then, to a rather straightforward calculation of the sought value $\operatorname{const}(p)$ by using formula~\eqref{eq:closedform} with the summation over $\Phi_{p}$ (see Table~\ref{tab:constants} and Appendix~\ref{sec:allvalues}).
\begin{table}[H]
\centering
\caption{For each $p \leqslant 6$, we report the total number of permutations $N! = (2p)!$, 
the size of the contributing set $|\Phi_p|$, its fraction of the size of the group $S_N$, the number of even and odd permutations in $\Phi_p$, and the 
corresponding value of $\mathrm{const}(p)$. The values $\mathrm{const}(p \leqslant 14)$ are in Appendix~\ref{sec:allvalues}.}
\renewcommand{\arraystretch}{1.4}
\begin{tabular}{c r r r r r r}
\hline\hline
$p$ & $N!$ & $|\Phi_p|$ & $|\Phi_p|/N!$ & even & odd & $\mathrm{const}(p)$ \\
\hline
$1$ & $2$ & $1$ & $1/2$ & $1$ & $0$ & $1$ \\
$2$ & $24$ & $3$ & $1/8$ & $1$ & $2$ & $2$ \\
$3$ & $720$ & $35$ & $\approx1/20$ & $18$ & $17$ & $90$ \\
$4$ & $40\,320$ & $1\,001$ & $\approx1/40$ & $500$ & $501$ & $586\,656$ \\
$5$ & $3\,628\,800$ & $53\,109$ & $\approx1/68$ & $26\,555$ & $26\,554$ & $1\,915\,103\,977\,500$ \\
$6$ & $479\,001\,600$ & $4\,605\,271$ & $\approx1/104$ & $2\,302\,635$ & $2\,302\,636$ & $\approx 7.886 \times 10^{21}$ \\
\hline\hline
\end{tabular}

\label{tab:constants}
\end{table}
We discover that, as $p$ grows (compared with the small values $p=1,2,3$ in Table~\ref{tab:intro}), our strategy to restrict the summation to the subset $\Phi_{p}\subseteq S_{N}$ becomes more and more effective. In the third and fourth columns of Table~\ref{tab:constants} we present the size of the set $\Phi_{p}$ (for $p\le 6$) and its fraction relative to $N!=$(the size of the group $S_N$); we see that the denominators in the approximations $|\Phi_{p}|/N!$ grow monotonically.
\begin{remark}
\label{rem:oddevenconj}
    We note one striking structural feature of this data, concerning the distribution of even and odd permutations within \(\Phi_p\). Namely, the counts of even and odd permutations in \(\Phi_p\) always differ by exactly 1, with the dominant parity alternating with \(p\): odd permutations dominate for even \(p\) and vice versa. We record this as a conjecture for \(p\ge7\) onwards.
\end{remark}
The sequence $|\Phi_p| = 1, 3, 35, 1001, 53109, 4605271, \dots$ for $p = 1, 2, 3, 4, 5, 6$ coincides with the alternating terms 
(those indexed by even $N = 2p$) of the OEIS sequence \href{https://oeis.org/A147681}{A147681}\cite{oeisA147681}, which counts `late-growing permutations' of $\{1, \dots, N\}$. A permutation $\sigma \in S_n$ is late-growing if its partial sums never exceed the corresponding partial sums of the average permutation, that is,
\[
    \frac1k\sum_{i=1}^{k} \sigma(i) \;\leq\; \frac{(n+1)}{2},
    \qquad \text{for all } k \leq n.\footnote{The right-hand side comes from the average of the numbers \(1,\dots,n\), which is \(\frac{n+1}{2}\). Thus the `average permutation' has partial sums \(\sum_{i=1}^k \frac{n+1}{2} = k\frac{n+1}{2}\), and dividing by \(k\) gives the stated bound.
}
\]
Our integer sequence \(\mathrm{const}(p)\), \(p\in\mathbb{N}_{\geq1}\) is now recorded in the OEIS as \href{https://oeis.org/A392714}{A392714}\cite{oeisA392714}.

\begin{claim}[O.~Zaboronski, private communication, June 2026]
\label{claim:positivity}
\[
\mathrm{const}(p) \;>\; 0.
\]
\end{claim}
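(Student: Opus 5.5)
The plan is to replace the monomial weights of Section~\ref{sec:monomials} by exponential weights, which turns $\mathrm{const}(p)$ into the quotient of an antisymmetrized polynomial by a Vandermonde determinant. Then I would try to write that quotient as a manifestly nonnegative integral or sum.

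\textbf{Step 1: exponential weights.} By Theorem~\ref{thm:main} and Lemma~\ref{lem:order}, identity~\eqref{eqn:reduced} holds for any smooth weights, so I take $w_j(x)=e^{\lambda_j x}$ with distinct parameters $\lambda_j$. The Wronskian is then $e^{(\lambda_1+\dots+\lambda_N)x}\,V(\lambda)$, where $V(\lambda)=\prod_{i<j}(\lambda_j-\lambda_i)$. Repeating the right-to-left bookkeeping from the proof of Claim~\ref{claim:closedform}, each operator $\partial_x^p$ acting on $e^{\mu x}$ produces the factor $\mu^p$. Writing $s_k^\sigma:=\lambda_{\sigma(N-k+1)}+\dots+\lambda_{\sigma(N)}$ for the suffix sums, this gives
\[
\mathrm{const}(p)\,V(\lambda)\;=\;\sum_{\sigma\in S_N}(-1)^\sigma\prod_{k=1}^{N-1}\bigl(s_k^\sigma\bigr)^p .
\]
This is the antisymmetrization $\mathrm{Alt}(g)$ of $g(\lambda)=\prod_{k=1}^{N-1}(\lambda_{N-k+1}+\dots+\lambda_N)^p$. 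Its degree is $p(N-1)=N(N-1)/2=\deg V$, which confirms that the quotient is a constant.

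\textbf{Step 2: pairing with $V$.} I would choose an $S_N$-invariant inner product $\langle\cdot,\cdot\rangle$ on polynomials. Under such a product, $\langle \mathrm{Alt}(g),V\rangle=N!\,\langle g,V\rangle$, because $V$ is itself alternating. It follows that
\[
\mathrm{const}(p)\;=\;\frac{N!\,\langle g,V\rangle}{\langle V,V\rangle},
\]
so it suffices to show $\langle g,V\rangle>0$ for one well-chosen product. I would try two candidates in turn.
\begin{itemize}
\item The apolar (Bombieri) pairing. Here $\langle g,V\rangle$ equals a positive multiple of $g(\partial_\lambda)V$, which is the constant obtained by applying the commuting operators $(\partial_{\lambda_{N-k+1}}+\dots+\partial_{\lambda_N})^p$ to $V$.
\item A Laplace/Gaussian-type integral. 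Here the same quantity becomes $\int g\,V\,d\mu$ for a symmetric measure $\mu$.
\end{itemize}

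\textbf{Step 3: the positivity argument, which I expect to be the main obstacle.} The change of variables $t_k=s_k$, i.e.\ the partial sums of $\lambda$ read from the right, is unimodular. After it, $g=\prod_k t_k^p$ becomes a pure monomial, and $V$ becomes a product of differences of the form $t_j-t_i$ together with the last single variables. The quantity $g(\partial)V$ is then, up to a positive constant, the coefficient of $\prod_k t_k^p$ in $V$ written in the $t$-variables. I would try to identify this coefficient with a signed count of lattice paths or tournaments, and then find a sign-reversing involution that leaves only positive terms. The structure of $\Phi_p$ supports this: it is the set of late-growing permutations, and its even and odd counts differ by exactly one (Remark~\ref{rem:oddevenconj}), which hints at exactly such an involution. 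If this fails, my fallback is to realize $\prod_k t_k^p$ as a moment $\mathbb{E}\prod_k T_k^p$ for positive random variables, using $t^p=\int_0^\infty u^p\,e^{-u}\,du\cdot(\cdots)$-type Laplace identities. The aim would be to rewrite $\langle g,V\rangle$ as $\mathbb{E}\bigl[\det(\cdots)^2\bigr]$ via Andr\'eief's identity, which is manifestly positive. Matching the resulting determinant structure to the suffix-sum shape of $g$ is the step I am least sure will go through.
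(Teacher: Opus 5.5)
The paper gives no proof of this claim; it is quoted from a private communication. So your proposal has to stand on its own.

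Steps~1 and~2 are correct and well chosen. With $w_j=e^{\lambda_jx}$, the alternating sum is an alternating polynomial of degree $\binom N2$, hence equals $\mathrm{const}(p)\,V$. The apolar pairing then gives $\mathrm{const}(p)\prod_{k=0}^{N-1}k!=g(\partial_\lambda)V$. As a check, at $\lambda=(0,1,2,3)$ the alternating sum equals $24=2V$, consistent with $\mathrm{const}(2)=2$.

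The gap is Step~3, which carries the whole positivity argument, and as written it fails. With $t_k=s_k$ you have $\lambda_{N-k+1}=t_k-t_{k-1}$. So each factor $\lambda_j-\lambda_i$ becomes a signed combination $t_a-t_{a-1}-t_b+t_{b-1}$, not a difference $t_j-t_i$.

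More seriously, the apolar pairing is not invariant under this non-orthogonal substitution, because derivatives transform by the inverse transpose: $\partial_{t_k}=\partial_{\lambda_{N-k+1}}-\partial_{\lambda_{N-k}}$. Complete the coordinates with $t_N=\lambda_1+\dots+\lambda_N$. Then the coefficient of $\prod_k t_k^p$ in $V$ is $(p!)^{1-N}h(\partial_\lambda)V$ with $h=\prod_{m=1}^{N-1}(\lambda_{m+1}-\lambda_m)^p$, which is a different constant. For $p=2$ this coefficient is $-18$, while $g(\partial_\lambda)V=24$. It is therefore not a positive multiple of the quantity you need. The sign-reversing involution you hope for is aimed at the wrong quantity, and neither it nor the Andr\'eief fallback is actually supplied. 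The parity balance of $\Phi_p$ is no evidence for such an involution, since the terms $c(\sigma,p)$ have very different sizes.

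The missing idea is the dual substitution, which makes $g(\partial_\lambda)$, rather than $g$, a monomial. Put $\lambda_i=u_{N-i+1}+\dots+u_{N-1}$, so $\lambda_1=0$; this is harmless because $g(\partial_\lambda)V$ is a constant. By the chain rule, $\partial_{u_k}$ acts on $V(\lambda(u))$ exactly as $\partial_{\lambda_{N-k+1}}+\dots+\partial_{\lambda_N}$ acts on $V$. For $i<j$ one gets $\lambda_j-\lambda_i=u_{N-j+1}+\dots+u_{N-i}$, a sum of consecutive variables. Hence
\[
\mathrm{const}(p)\prod_{k=0}^{N-1}k!\;=\;(p!)^{N-1}\,\bigl[u_1^p\cdots u_{N-1}^p\bigr]\prod_{1\le a\le b\le N-1}\bigl(u_a+u_{a+1}+\dots+u_b\bigr).
\]
This is a product of linear forms with nonnegative coefficients, so there is no cancellation at all. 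The extracted coefficient counts the ways to send each interval $[a,b]\subseteq\{1,\dots,N-1\}$ to a point of it so that every point receives exactly $p$ intervals.

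One such map sends $[a,b]\mapsto a$ if $b-a$ is even and $[a,b]\mapsto b$ if $b-a$ is odd. Under it, point $c$ receives $\lfloor(2p-1-c)/2\rfloor+1+\lfloor c/2\rfloor=p$ intervals. So the count is at least $1$, and $\mathrm{const}(p)>0$.

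As a bonus, this count is exactly the paper's $D(p)=\mathrm{const}(p)\prod_{k=0}^{N-1}k!/(p!)^{N-1}$; for example, $D(2)=3$ counts the three assignments for $p=2$. This explains why $D(p)$ is a positive integer.
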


\section{Growth rate of \texorpdfstring{$\mathrm{const}(p)$}{const(p)}}
\label{sec:growth}
\begin{remark}
    The values of our fast growing sequence \(\operatorname{const}(p)\) are accumulated by `integrating' over the permutation groups \(S_N\) with \(N=2p\). As \(p\to\infty\), the `volume' \(|S_N|=N!\) also grows super-exponentially. Suppose that the alternation (that yields \(\mathrm{const}(p)\)) is combined with the normalization by the size of the groups. In Table~\ref{tab:normal} we report the ratios $\mathrm{const}(p)/p!$ and $\mathrm{const}(p)/N!$ with $N=2p$. Neither ratio remains bounded: after an initial dip, both grow rapidly, indicating that $\mathrm{const}(p)$ eventually outpaces even $N!$ in growth rate.
\end{remark}
\begin{table}[H]
\centering
\caption{The sequence \(\mathrm{const}(p)\) grows super-exponentially, but so do the factorials in \(|S_N|=N!\) with \(N=2p\); here is their ratio.}
\label{tab:normal}
\renewcommand{\arraystretch}{1.6}
\begin{tabular}{crrrrr}
\hline\hline
\(p\)& \(p!\) & \(N!\) & \(\operatorname{const}(p)\)& \({\operatorname{const}(p)}/{p!}\) & \({\operatorname{const}(p)}/{N!}\) \\
\hline
$1$ & $1$& $2$ & $1$ & $1$& $0.5$   \\
$2$ & $2$& $24$& $2$ & $1$& $0.083$  \\
$3$ & $6$& $720$      & $90$& $15$& $0.125$   \\
$4$ & $24$      & $40\,320$  & $586\,656$     & $24\,444$  & $14.55$ \\
$5$ & $120$     & $3\,628\,800$ & $\approx 1.9\cdot 10^{12}$& $\approx 1.6\cdot 10^{10}$   & $\approx 5.2\cdot 10^{5}$   \\
$6$ & $720$     & $479\,001\,600$& $\approx 7.9\cdot 10^{21}$& $\approx 1.1\cdot 10^{19}$   & $\approx 1.6\cdot 10^{13}$  \\
\hline\hline
\end{tabular}
\end{table}

\subsection{Growth-rate fits for \texorpdfstring{$\mathrm{const}(p)$}{const(p)}}
\label{sec:growthfits}
\begin{claim}[O.~Zaboronski, private communication, June 2026]
\label{claim:zaboronski}
\[
p^2\log p\cdot\bigl(1+O(1/\log p)\bigr)\;\leqslant\;\log \mathrm{const}(p)\;\leqslant\;2\,p^2\log p\cdot\bigl(1+O(1/\log p)\bigr).
\]
\end{claim}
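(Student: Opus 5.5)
Since $\mathrm{const}(p)$ is universal, we may compute it with any convenient weights. The plan is to use the monomial normalisation of Claim~\ref{claim:closedform}, namely
\[
\mathrm{const}(p)=\frac{1}{\prod_{k=0}^{N-1}k!}\sum_{\sigma\in\Phi_p}(-1)^\sigma\prod_{k=1}^{N-1}E_k^{\,\underline{p}},
\]
and to bound it from both sides. The denominator is $\prod_{k<2p}k!$, whose logarithm is $\sum_{k<2p}k\log k+O(p^2)=2p^2\log p+O(p^2)$ by Stirling's formula. Every bound below will be measured against the scale $p^2\log p$, with all $O(p^2)$ terms absorbed into the factor $1+O(1/\log p)$.

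\textbf{Upper bound.} I would estimate the sum crudely, using the triangle inequality, by $|\Phi_p|\max_{\sigma}\prod_k E_k^{\,\underline{p}}$. First, $|\Phi_p|\le (2p)!$, which contributes only $O(p\log p)$ to the logarithm. Second, each $E_k$ is at most the corresponding master entry $v^{\mathrm m}_k$, because $\delta_k\ge0$. Hence
\[
\prod_k E_k^{\,\underline{p}}\le F(\mathbf v^{\mathrm m}).
\]
The master vector satisfies $v^{\mathrm m}_k=k(N-1)-\tfrac{k(k-1)}2-(k-1)p$, which is of size $\Theta(p^2)$ for most $k$. Therefore
\[
\log F(\mathbf v^{\mathrm m})\le (2p-1)\,p\,\log(O(p^2))=4p^2\log p+O(p^2).
\]
Subtracting the denominator gives $\log\mathrm{const}(p)\le 2p^2\log p\,(1+O(1/\log p))$. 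The contribution of $\log|\Phi_p|$ is negligible at this scale.

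\textbf{Lower bound.} Here cancellation is the obstacle, since the alternating sum could in principle be much smaller than its largest term. I would not try to control the signed sum over $\Phi_p$ directly. Instead I would use the freedom in choosing the weights in Theorem~\ref{thm:main}. I would pick weights for which the alternating composition has a structural evaluation. One candidate is exponentials $w_j=e^{\lambda_j x}$, whose Wronskian is the Vandermonde determinant $\prod_{i<j}(\lambda_j-\lambda_i)$ times $e^{x\sum\lambda_j}$. For these weights the composition at $x=0$ becomes an antisymmetrisation of a polynomial in the $\lambda$'s. Specifically, $\prod_{k}(\lambda_{\sigma(k)}+\cdots+\lambda_{\sigma(N)})^p$ arises from commuting each $\partial^p$ past the exponentials to its right. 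Thus $\mathrm{const}(p)$ equals the coefficient obtained by dividing the antisymmetrisation of
\[
P(\lambda)=\prod_{k=2}^{N}(\lambda_k+\dots+\lambda_N)^p
\]
by the Vandermonde determinant. Extracting this amounts to a single coefficient extraction: the antisymmetrisation picks out the coefficient of $\lambda^{\rho}$ with $\rho=(0,1,\dots,N-1)$, taken in all orderings with signs, in $P$.

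The key step, and the one I expect to be hardest, is showing that this signed coefficient is at least about $e^{p^2\log p}$. My plan is to first substitute the partial sums $s_k=\lambda_k+\dots+\lambda_N$ as new variables; the change of variables is unimodular. Positivity (Claim~\ref{claim:positivity}) would then suggest rewriting the signed coefficient as a positive quantity, for example as a determinant of a matrix of multinomial or falling-factorial entries. That determinant could then be bounded below by its dominant diagonal, using Hadamard-type or total-positivity arguments. If that fails, the fallback is a probabilistic one: show that the identity term $F(\mathbf v^{\mathrm m})$, suitably grouped with its neighbours, dominates the remaining signed terms within a factor of $e^{O(p^2)}$. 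Either route gives
\[
\log\mathrm{const}(p)\ge \log F(\mathbf v^{\mathrm m})-\log\prod_{k<2p}k!-O(p^2),
\]
which must then be checked to be $\ge p^2\log p\,(1+O(1/\log p))$ via a Riemann-sum evaluation of $\sum_k p\log v^{\mathrm m}_k$. I expect the bound to come out as exactly $p^2\log p$, because only about half of the factors $v^{\mathrm m}_k$ effectively survive the cancellation.
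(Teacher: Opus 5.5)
The paper gives no proof of this claim (it is quoted from a private communication), so your proposal has to stand on its own. Your upper bound does. The triangle inequality with $|\Phi_p|\le(2p)!$, $E_k\le v^{\mathrm m}_k\le p(p+1)/2$ and $\log\prod_{k<2p}k!=2p^2\log p+O(p^2)$ gives $\log|\mathrm{const}(p)|\le 2p^2\log p+O(p^2)$. Together with Claim~\ref{claim:positivity}, this is the upper half. Your exponential-weight identity
\[
\sum_{\sigma\in S_N}(-1)^\sigma\prod_{k=2}^{N}\bigl(\lambda_{\sigma(k)}+\dots+\lambda_{\sigma(N)}\bigr)^p=\mathrm{const}(p)\prod_{i<j}(\lambda_j-\lambda_i)
\]
is also correct, since both sides have degree $(N-1)p=\binom N2$. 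It shows in addition that $\mathrm{const}(p)\in\mathbb Z$.

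The gap is the lower bound: you list strategies but give no argument, and none of them works as stated.
\begin{itemize}
\item Positivity yields only $\mathrm{const}(p)\ge 1$, i.e.\ $\log\mathrm{const}(p)\ge 0$.
\item You never produce the determinant representation. In any case, Hadamard's inequality and its totally-positive refinements (Fischer, Koteljanskii) bound a determinant from \emph{above} by its diagonal. A lower bound would need something like strict diagonal dominance, which you do not establish.
\item The substitution $s_k=\lambda_k+\dots+\lambda_N$ does not help, because permuting the $\lambda$'s does not permute the $s$'s.
\item The fallback misdescribes the mechanism: no term dominates. The adjacent transposition with $\boldsymbol{\delta}=\mathbf e_k$ (the $k$-th unit vector) contributes $-F(\mathbf v^{\mathrm m})(v^{\mathrm m}_k-p)/v^{\mathrm m}_k$, which is $-F(\mathbf v^{\mathrm m})(1-O(1/p))$ for most $k$. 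Write $W=\prod_{k<2p}k!$. The paper's data give $\mathrm{const}(p)\,W/F(\mathbf v^{\mathrm m})=1/3,\ 1/100,\ \approx 1.9\cdot10^{-5},\ \approx 1.9\cdot10^{-9}$ for $p=2,3,4,5$.
\end{itemize}
So the signed sum sits far below its largest term, apparently by a factor $e^{-\Theta(p^2)}$, even though it has only $e^{O(p\log p)}$ terms. No grouping or counting argument can detect cancellation of this size. A lower bound needs an exact structural identity that controls the cancellation quantitatively, and that is the missing idea.

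Your bookkeeping is also inconsistent. Since $v^{\mathrm m}_k=(k+1)(2p-k)/2$ (Lemma~\ref{lem:masterclosed}), one has $\log F(\mathbf v^{\mathrm m})-\log W=2p^2\log p+O(p^2)$. Your displayed inequality would therefore give leading coefficient $2$, which is stronger than the claim and in line with the fitted $\alpha\approx 1.9985$. It would not give the $p^2\log p$ you anticipate from `half of the factors surviving'; that heuristic corresponds to nothing in your argument.
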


\noindent The ratios in Table~\ref{tab:normal} suggest that $\log(\mathrm{const}(p))$ grows like a low-degree polynomial in $p$ times a factorial-type logarithm. We test this by fitting $\log(\mathrm{const}(p))$ for $p=5,\dots,14$ (the values $p\le4$ are shown but excluded from the fit, being too small to constrain the asymptotics) against several two-parameter candidate laws by nonlinear least squares. Of seven candidate laws tested, the two best-fitting, both attaining $R^2>0.9999$, are
\[
\log(\mathrm{const}(p)) \approx c\,p\log(p!) + d\log((2p)!), \quad c = 1.76110,\ d = -0.98956, \quad R^2 = 0.999989,
\]
and
\[
\log(\mathrm{const}(p)) \approx c\,p^2\log p + d\,p^2, \qquad c = 1.71913,\ d = -1.71332, \qquad R^2 = 0.999967.
\]
The other five candidate laws we tried perform markedly worse; Table~\ref{tab:growthfits} ranks all seven by their maximal residual $\max|\mathrm{resid}|$ in $\log(\mathrm{const}(p))$-space.
\begin{table}[H]
\centering
\small
\caption{All seven candidate growth laws fitted to $\log(\mathrm{const}(p))$ for $p=5,\dots,14$ by nonlinear least squares, ranked by $\max|\mathrm{resid}|$ (best first).}
\label{tab:growthfits}
\renewcommand{\arraystretch}{1.3}
\begin{tabular}{l l c c}
\hline\hline
Model & Fitted parameters & $R^2$ & $\max|\mathrm{resid}|$ \\
\hline
$c\,p\log(p!)+d\log((2p)!)$ & $c=1.76110,\ d=-0.98956$ & $0.999989$ & $1.0713$ \\
$c\,p^2\log p+d\,p^2$ & $c=1.71913,\ d=-1.71332$ & $0.999967$ & $1.9428$ \\
$c\,(p^2\log p - p^2)$ & $c=1.72301$ & $0.999967$ & $2.0291$ \\
$c\,p\log(p!)+d\,p$ & $c=1.68354,\ d=-2.92665$ & $0.999934$ & $2.6143$ \\
$c\,p\log(p!)$ & $c=1.53680$ & $0.997127$ & $12.6283$ \\
$\alpha\log((2p)!)+b\,p$ & $\alpha=20.93663,\ b=-64.11317$ & $0.987221$ & $32.6112$ \\
$\alpha\log((2p)!)$ & $\alpha=6.43314$ & $0.831388$ & $117.8771$ \\
\hline\hline
\end{tabular}
\end{table}
Figure~\ref{fig:growthfits} shows both fitted curves together with the data.
\begin{figure}[H]
\centering
\includegraphics[width=0.75\linewidth]{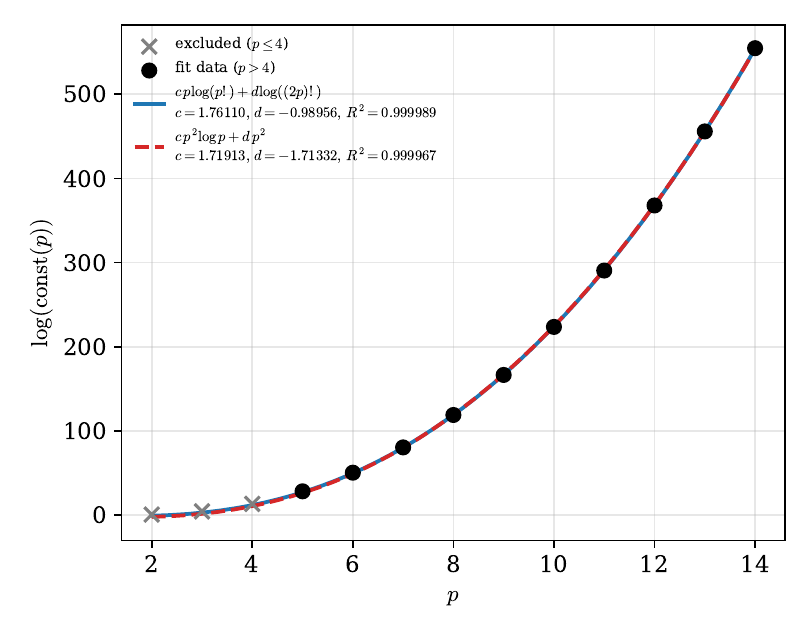}
\caption{The two best-fitting growth laws for $\log(\mathrm{const}(p))$ against $p$ (fit over $p>4$; the excluded points $p\le4$ are marked with a cross).}
\label{fig:growthfits}
\end{figure}

\medskip
\noindent Motivated by Claim~\ref{claim:zaboronski} and the two best fits above, we further fit two more general ansatz to $\log(\mathrm{const}(p))$ over the same range $p=5,\dots,14$: the full six-parameter `raw' polynomial in the primitive arguments $p$ and $\log p$,
\[
\log(c(p)) \approx \alpha\,p^2\log p+\beta\,p^2+\gamma\,p\log p+\delta\,p+\epsilon\log p+\mu,
\]
and the four-parameter factorial ansatz
\[
\log(c(p)) \approx A\,p\log(p!)+B\log((2p)!)+E\log(p!)+F.
\]
Ordinary least squares gives
\[
\alpha=1.998535,\ \beta=-2.667511,\ \gamma=0.909466,\ \delta=-0.027817,\]\[
\epsilon=3.023176,\ \mu=2.510459,\  \max|\mathrm{resid}|=9.7\times10^{-7},
\]
and
\[
A=1.827645,\ B=1.176431,\ E=-6.693739,\ F=-1.195608, \ \max|\mathrm{resid}|=6.0\times10^{-3}.
\]
Both leading coefficients, $\alpha\approx1.9985$ and $A\approx1.8276$, lie inside the interval $[1,2]$ delimited by Claim~\ref{claim:zaboronski}, with $\alpha$ in particular very close to the upper endpoint. Figure~\ref{fig:growthfitsfull} shows these two fits together with the data.
\begin{figure}[H]
\centering
\includegraphics[width=0.75\linewidth]{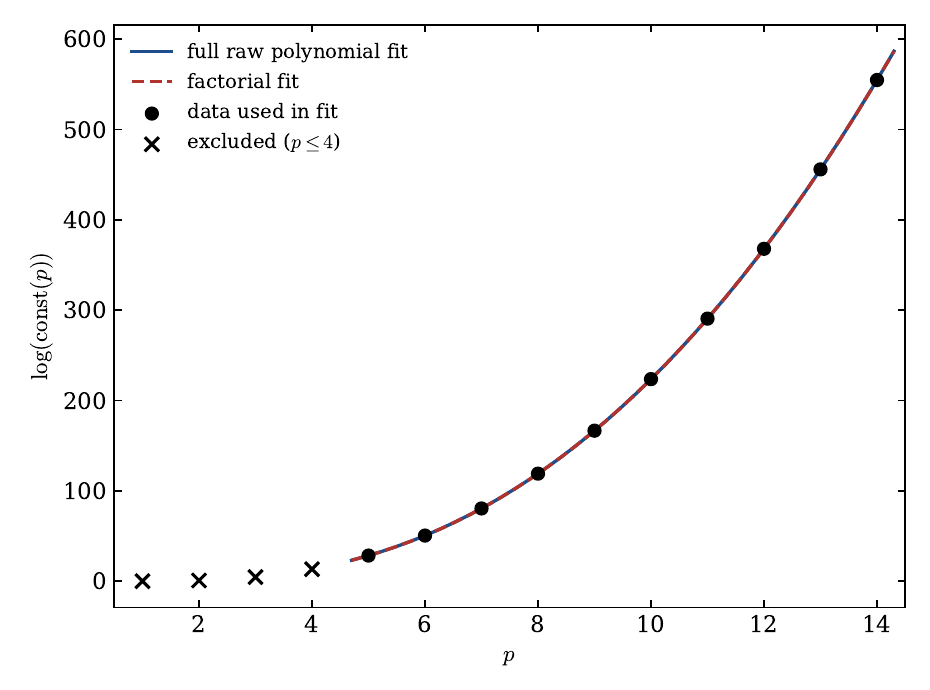}
\caption{The full raw-polynomial and factorial fits of $\log(\mathrm{const}(p))$ against $p$ (fit over $p>4$; the excluded points $p\le4$ are marked with a cross).}
\label{fig:growthfitsfull}
\end{figure}

\section{$p$-adic valuations of \texorpdfstring{$\mathrm{const}(p)$}{const(p)}}
\label{sec:padic}

\noindent We now shift our focus to some number-theoretic properties of our constants. We record the prime factorisations of our constants in Table~\ref{tab:factorization} (full table in Appendix~\ref{sec:allvalues}), and in
Table~\ref{tab:valuations} we display the \(p-\)adic valuation $v_q(\mathrm{const}(p))$ for all primes
$q \leq 19$ and $p \leq 14$, where a dot denotes a zero entry.

\begin{table}[h]
\centering
\small
\caption{Prime factorisation of $\mathrm{const}(p)$ for $2 \leq p \leq 12$.}
\label{tab:factorization}
\begin{tabular}{r@{\quad}p{0.75\textwidth}}
\toprule
$p$ & Prime factorisation of $\mathrm{const}(p)$ \\
\midrule
 2 & $2$ \\[2pt]
 3 & $2 \cdot 3^{2} \cdot 5$ \\[2pt]
 4 & $2^{5} \cdot 3^{3} \cdot 7 \cdot 97$ \\[2pt]
 5 & $2^{2} \cdot 3 \cdot 5^{4} \cdot 7 \cdot 79 \cdot 103 \cdot 4483$ \\[2pt]
 6 & $2^{5} \cdot 3^{6} \cdot 5^{2} \cdot 11 \cdot 23 \cdot 223 \cdot 239
     \cdot 1002820739$ \\[2pt]
 7 & $2^{4} \cdot 3^{2} \cdot 7^{6} \cdot 13 \cdot 29 \cdot 1361
     \cdot 2928279181 \cdot 3373618837829$ \\[2pt]
 8 & $2^{17} \cdot 3^{2} \cdot 7^{5} \cdot 13 \cdot 269 \cdot 9661 \cdot 4772558055911\cdot\; 1437228585673967071537$ \\[2pt]
 9 & $2^{11} \cdot 3^{14} \cdot 7^{2} \cdot 11^{2} \cdot 13
     \cdot 17 \cdot 43 \cdot 151 2 536393 \cdot 1193 919540 273283$\newline $\cdot 8 160866 422780 307366 231285 008303$\\[2pt]
10 & $2^{16} \cdot 3^{6} \cdot 5^{8} \cdot 19\cdot 2318 471913 717203$\newline $\cdot 15 042737 423986 318201 449432 278945 171198 976057 429145 160723 825671 082747$\\[2pt]
11 & $2^{7} \cdot 3^{5} \cdot 5^{6} \cdot 11^{10} \cdot 13 \cdot 19\cdots$ \\[2pt]
12 & $2^{18} \cdot 3^{11} \cdot 5^{5} \cdot 11^{8} \cdot 23\cdots$ \\[2pt]
13 & $2^{13}\cdot3^8\cdot5^2\cdot11^6\cdot13^{12}\cdot17\cdots$\\[2pt]
14&$2^{17}\cdot3^3\cdot5^2\cdot7^{13}\cdot11^4\cdot13^{11}\cdots$\\
\bottomrule
\end{tabular}
\end{table}

\begin{table}[h]
\centering
\small
\caption{$p$-adic valuations $v_q\bigl(\mathrm{const}(p)\bigr)$ for primes
$q \leq 19$ and $2 \leq p \leq 14$. The entries $v_p(\mathrm{const}(p))$
for prime $p$ are \textbf{bold}.}
\label{tab:valuations}
\begin{tabular}{r*{13}{c}}
\toprule
$q \,\backslash\, p$
  & 2 & 3 & 4 & 5 & 6 & 7 & 8 & 9 & 10 & 11 & 12&13&14 \\
\midrule
 2 & $\mathbf{1}$ &  1 &  5 &  2 &  5 &  4 & 17 & 11 & 16 &  7 & 18 &13&17\\
 3 & $\cdot$ & $\mathbf{2}$ &  3 &  1 &  6 &  2 &  2 & 14 &  6 &  5 & 11 &8&3 \\
 5 & $\cdot$ &  1 & $\cdot$ & $\mathbf{4}$ &  2 & $\cdot$ & $\cdot$ & $\cdot$ &  8 &  6 &  5 &2&2\\
 7 & $\cdot$ & $\cdot$ &  1 &  1 & $\cdot$ & $\mathbf{6}$ &  5 &  2 & $\cdot$ & $\cdot$ & $\cdot$ & $\cdot$&13 \\
11 & $\cdot$ & $\cdot$ & $\cdot$ & $\cdot$ &  1 & $\cdot$ & $\cdot$ &  2 & $\cdot$ & $\mathbf{10}$ &  8 &6&4\\
13 & $\cdot$ & $\cdot$ & $\cdot$ & $\cdot$ & $\cdot$ &  1 &  1 &  1 & $\cdot$ &  1 & $\cdot$ &$\mathbf{12}$&11\\
17 & $\cdot$ & $\cdot$ & $\cdot$ & $\cdot$ & $\cdot$ & $\cdot$ & $\cdot$&1 & $\cdot$ & $\cdot$ & $\cdot$&1 & $\cdot$\\
19 & $\cdot$ & $\cdot$ & $\cdot$ & $\cdot$ & $\cdot$ & $\cdot$ & $\cdot$ & $\cdot$&1&1 & $\cdot$ & $\cdot$ & $\cdot$\\
\bottomrule
\end{tabular}
\end{table}

\subsection{A proven lower bound on \texorpdfstring{$v_p(\mathrm{const}(p))$}{v\_p(const(p))}}
\label{sec:lowerbound}

\noindent In this section we will prove that for prime \(p\), \(v_p(\mathrm{const}(p))\geq p-1\), which gives a lower bound on $v_p(\mathrm{const}(p))$. 

\begin{notation}\label{not:padic}
For a prime $p$ and a nonzero rational number $r$, let $v_p(r)\in\mathbb{Z}$ denote its $p$-adic valuation: writing $r=p^{e}\cdot\frac{a}{b}$ with $a,b$ integers coprime
to $p$, we set $v_p(r):=e$. We recall the ultrametric inequality
\[
v_p(x+y)\;\geq\;\min\bigl(v_p(x),v_p(y)\bigr),
\]
valid for all rationals $x,y$; it extends inductively to any
finite sum\footnote{
\(
v_p\!\left(\sum_{i=1}^{n} x_i\right)\ge \min_{1\le i\le n} v_p(x_i)
\)
for all finite collections \(\{x_i\}\). Indeed, the case \(n=2\) is the ultrametric inequality. If the result holds for \(n-1\) terms, then
\[
v_p\!\left(\sum_{i=1}^{n}x_i\right)=
v_p\!\left(\left(\sum_{i=1}^{n-1}x_i\right)+x_n\right)
\ge
\min\!\left(
v_p\!\left(\sum_{i=1}^{n-1}x_i\right),
v_p(x_n)
\right)
\ge
\min_{1\le i\le n} v_p(x_i),
\]
which completes the induction.
}.
\end{notation}

\begin{lemma}\label{lem:masterclosed}
For $N=2p$ and $k=1,\dots,N-1$ the `master vector' (from Section~\ref{sec:deviation}) $v^{\mathrm m}_k$ equals
\[
v^{\mathrm m}_k \;=\; \frac{(k+1)(2p-k)}{2}.
\]
Consequently, over $k\in\{1,\dots,N-1\}$, $v^{\mathrm m}_k$ attains its maximum at
$k=p-1$ and at $k=p$, where
\[
\max_{1\le k\le N-1} v^{\mathrm m}_k \;=\; \frac{p(p+1)}{2}.
\]
\end{lemma}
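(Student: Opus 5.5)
We start from the formula for the master vector in Section~\ref{sec:deviation},
\[
v^{\mathrm m}_k = k(N-1) - \tfrac{k(k-1)}{2} - (k-1)p,
\]
and substitute $N=2p$. This gives
\[
v^{\mathrm m}_k = k(2p-1) - \tfrac{k(k-1)}{2} - kp + p = kp - k + p - \tfrac{k^2-k}{2}.
\]
Multiplying by $2$, we get $2kp - 2k + 2p - k^2 + k = 2kp + 2p - k^2 - k$. On the other hand, expanding the target expression gives $(k+1)(2p-k) = 2kp - k^2 + 2p - k$. The two agree, which proves the closed form. This step is routine algebra; the only care needed is to use the version of $v^{\mathrm m}_k$ already derived from $E_k(\sigma_{\mathrm{id}})$, where $\sigma(N-j)-1 = N-1-j$.

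For the maximum, we regard $g(k) = (k+1)(2p-k)$ as a concave quadratic in the real variable $k$. Its roots are $k=-1$ and $k=2p$, so its vertex sits at their midpoint, $k = p - \tfrac12$. The integers nearest the vertex are $k=p-1$ and $k=p$, and they are equidistant from it. Since a parabola is symmetric about its vertex, $g(p-1)=g(p)$. Concavity then shows that these two values are the maximum over all integers, and in particular over $\{1,\dots,N-1\}$. Both points lie in that range, because $p\ge1$ gives $p\le 2p-1$, and $p-1\ge1$ whenever $p\ge2$. In the degenerate case $p=1$ the index set is just $\{1\}$ and the claim is trivial.

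Evaluating the maximum directly gives $v^{\mathrm m}_p = \tfrac{(p+1)(2p-p)}{2} = \tfrac{p(p+1)}{2}$. As a check, $v^{\mathrm m}_{p-1} = \tfrac{p\,(p+1)}{2}$ as well. The values also match the tabulated master vectors in the Remark after the deviation-vector example, for instance $\max(7,9,10,10,9,7,4)=10=\tfrac{4\cdot5}{2}$ for $p=4$.

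There is no real obstacle here. The only subtlety is the endpoint check for $p=1$ (or $p-1<1$), and I will mention it explicitly.
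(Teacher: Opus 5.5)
Your proposal is correct and follows the paper's proof essentially step for step. You substitute $N=2p$ into $v^{\mathrm m}_k=k(N-1)-\tfrac{k(k-1)}{2}-(k-1)p$, simplify to $\tfrac{(k+1)(2p-k)}{2}$, and locate the maximum via the vertex $k=p-\tfrac12$ of the downward parabola. Your separate treatment of $p=1$, where $k=p-1=0$ lies outside the index range, is a small point of care that the paper's proof omits.
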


\begin{proof}
Substituting $N=2p$ into $v^{\mathrm m}_k = k(N-1)-\tfrac{k(k-1)}{2}-(k-1)p$ and
simplifying gives
\[
v^{\mathrm m}_k = k(2p-1)-\frac{k(k-1)}{2}-(k-1)p
= p(k+1) - \frac{k(k+1)}{2} = \frac{(k+1)(2p-k)}{2}.
\]
As a function of the variable $k$, $(k+1)(2p-k)$ is a downward-opening parabola with vertex at $k=\tfrac{2p-1}{2}=p-\tfrac12$. Since this vertex is equidistant from the integers $p-1$ and $p$, both give the same maximal value $(k+1)(2p-k)\big|_{k=p-1}=(k+1)(2p-k)\big|_{k=p}=p(p+1)$, which yields the stated maximum of $v^{\mathrm m}_k$.
\end{proof}

\begin{lemma}\label{lem:Erange}
For every $\sigma\in\Phi_p$ and every $k=1,\dots,N-1$,
\[
p \;\le\; E_k(\sigma) \;\le\; \frac{p(p+1)}{2} \;<\; p^2.
\]
\end{lemma}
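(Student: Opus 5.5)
The plan has three parts: a lower bound from the filter, an upper bound from the deviation vector, and a short comparison of $p(p+1)/2$ with $p^2$.

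\emph{Lower bound.} This should come directly from the definition of $\Phi_p$. The filter in Generator (ver.~1) keeps only those $\sigma$ with $T_k\ge 0$ for all $k$. As noted after \eqref{eq:deviation}, this is the same as requiring $E_k(\sigma)\ge p$ for all $k$. To make this self-contained I will check the identity $E_k = T_k + p$ explicitly:
\[
E_k=\sum_{j=0}^{k-1}(\sigma(N-j)-1)-(k-1)p=\sum_{j=0}^{k-1}(\sigma(N-j)-1-p)+p .
\]
For $k\le N-1$ the last sum is exactly $T_k$, so $T_k\ge0$ gives $E_k\ge p$. (If one prefers not to rely on the filter, $E_k\ge p$ is in any case necessary for the term to be nonzero, since the $k$-th differentiation acts on $x^{E_k}$.)

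\emph{Upper bound.} I will use the deviation vector from Section~\ref{sec:deviation}, $E_k(\sigma)=v^{\mathrm m}_k-\delta_k(\sigma)$. The key step is to show $\delta_k(\sigma)\ge0$ for every permutation. Here
\[
\delta_k(\sigma)=\sum_{j=0}^{k-1}(N-j)-\sum_{j=0}^{k-1}\sigma(N-j).
\]
The first sum is the sum of the $k$ largest elements of $\{1,\dots,N\}$. The second is a sum of $k$ distinct elements of the same set, so it cannot exceed the first. Hence $\delta_k(\sigma)\ge 0$ and $E_k(\sigma)\le v^{\mathrm m}_k$. Lemma~\ref{lem:masterclosed} then bounds $v^{\mathrm m}_k$ by $\max_k v^{\mathrm m}_k = p(p+1)/2$.

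\emph{Strict inequality.} Finally, $p(p+1)/2<p^2$ is equivalent to $p+1<2p$, i.e.\ $p>1$. This is the one delicate point. For $p=1$ we get $p(p+1)/2=1=p^2$, so the strict inequality fails; I would state the last inequality for $p\ge2$, noting that the lemma is only needed for prime $p\ge2$. Apart from this edge case there is no real obstacle: the only substantive step is the observation that the $k$ largest values maximise any sum of $k$ distinct entries.
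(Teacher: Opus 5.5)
Your proposal is correct and follows the paper's proof step for step. The lower bound is the filter condition $T_k\ge 0$ rewritten as $E_k\ge p$. The upper bound is $E_k=v^{\mathrm m}_k-\delta_k\le v^{\mathrm m}_k\le p(p+1)/2$ via Lemma~\ref{lem:masterclosed}. The strict inequality needs $p>1$, which the paper likewise secures only for prime $p$. Your explicit checks of $E_k=T_k+p$ and of $\delta_k(\sigma)\ge 0$ (the $k$ largest entries maximise a sum of $k$ distinct values) fill in two steps the paper merely asserts.
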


\begin{proof}
The lower bound is precisely the defining condition of $\Phi_p$ (\S\ref{sec:algorithm}); equivalently, $\delta_k(\sigma)\le v^{\mathrm m}_k-p$, as noted following
\eqref{eq:deviation}. The upper bound follows from $\delta_k(\sigma)\ge 0$ together with Lemma~\ref{lem:masterclosed}: $E_k(\sigma)=v^{\mathrm m}_k-\delta_k(\sigma)\le
v^{\mathrm m}_k\le\frac{p(p+1)}{2}$. The strict inequality $\frac{p(p+1)}{2}<p^2$ is equivalent to $p>1$, which holds for every prime $p$.
\end{proof}

\begin{lemma}\label{lem:fallingvaluation}
Let $p$ be a prime and let $m$ be an integer with $p\le m<p^2$. Then
$v_p\bigl(m!/(m-p)!\bigr) = 1$.
\end{lemma}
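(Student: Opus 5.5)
The falling factorial $m!/(m-p)! = m(m-1)\cdots(m-p+1)$ is a product of $p$ consecutive integers, namely $m-p+1,\dots,m$. I will compute its $p$-adic valuation factor by factor, using additivity of $v_p$ on products:
\[
v_p\bigl(m!/(m-p)!\bigr)=\sum_{i=0}^{p-1}v_p(m-i).
\]

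\textbf{Step 1: exactly one factor is divisible by $p$.} The $p$ consecutive integers $m-p+1,\dots,m$ form a complete residue system modulo $p$. Hence exactly one of them is divisible by $p$; call it $j$. Every other factor has valuation $0$, so the whole sum reduces to $v_p(j)$.

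\textbf{Step 2: $j$ is not divisible by $p^2$.} Since $m\ge p$, we have $j\ge m-p+1\ge 1$, so $j$ is a positive multiple of $p$. Since $m<p^2$, we have $j\le m<p^2$. A positive multiple of $p$ below $p^2$ has the form $j=ap$ with $1\le a\le p-1$. Such an $a$ is coprime to the prime $p$, so $v_p(j)=1$.

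Combining the two steps gives $v_p\bigl(m!/(m-p)!\bigr)=1$. As a cross-check, Legendre's formula gives the same answer: since $m<p^2$, only the $\lfloor\cdot/p\rfloor$ terms survive, and $\lfloor m/p\rfloor-\lfloor (m-p)/p\rfloor=1$.

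There is no real obstacle here. The only point needing care is Step 2, which must use both hypotheses: the lower bound $m\ge p$ guarantees $j$ is a positive multiple of $p$ (not $0$), and the upper bound $m<p^2$ rules out $p^2\mid j$. This lemma will then combine with Lemma~\ref{lem:Erange} to give $v_p(E_k^{\,\underline{p}})=1$ for every $k$, so each term in \eqref{eq:closedform} has valuation exactly $N-1=2p-1$.
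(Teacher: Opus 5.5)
Your proof is correct, but your main line differs from the paper's. The paper uses only Legendre's formula. Since $0\le m-p<m<p^2$, only the $i=1$ terms survive, so $v_p(m!)-v_p((m-p)!)=\lfloor m/p\rfloor-(\lfloor m/p\rfloor-1)=1$, which is exactly your closing cross-check. Your primary argument works factor by factor instead. Among the $p$ consecutive positive integers $m-p+1,\dots,m$, exactly one is divisible by $p$, and your bounds $1\le j\le m<p^2$ force $v_p(j)=1$. The Legendre route is shorter and purely mechanical. Yours is self-contained and shows where the valuation comes from. Made slightly more explicit ($j=p\lfloor m/p\rfloor$), it gives the general identity $v_p\bigl(m!/(m-p)!\bigr)=1+v_p(\lfloor m/p\rfloor)$ for every $m\ge p$. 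So the hypothesis $m<p^2$ is used only to ensure $p\nmid\lfloor m/p\rfloor$, and could be weakened to that condition.
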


\begin{proof}
We recall Legendre's formula, which states that $v_p(n!)=\sum_{i\ge1}\lfloor n/p^i\rfloor$ for $n\ge0$. Both $m$
and $m-p$ lie in $[0,p^2)$, so every term with $i\ge2$ vanishes, giving
$v_p(m!)=\lfloor m/p\rfloor$ and $v_p((m-p)!)=\lfloor (m-p)/p\rfloor=\lfloor m/p\rfloor-1$.
Subtracting the two expressions gives $v_p\bigl(m!/(m-p)!\bigr)=v_p(m!)-v_p((m-p)!)=1$,
as claimed.
\end{proof}

\begin{lemma}\label{lem:cvaluation}
For every prime $p$ and every $\sigma\in\Phi_p$,
\[
v_p\bigl(c(\sigma,p)\bigr) \;=\; N-1 \;=\; 2p-1,
\qquad\text{where } c(\sigma,p):=\prod_{k=1}^{N-1}E_k^{\,\underline{p}}.
\]
\end{lemma}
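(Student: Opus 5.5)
The plan is to compute the valuation factor by factor and add the results. By definition $c(\sigma,p)=\prod_{k=1}^{N-1}E_k^{\,\underline{p}}$ is a product of $N-1$ positive integers. Since $v_p$ is additive on products of nonzero rationals,
\[
v_p\bigl(c(\sigma,p)\bigr)=\sum_{k=1}^{N-1}v_p\bigl(E_k^{\,\underline{p}}\bigr).
\]
It therefore suffices to show that each summand equals~$1$. Summing then gives $N-1=2p-1$.

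For a fixed $\sigma\in\Phi_p$ and a fixed index $k$, I would apply Lemma~\ref{lem:Erange}, which gives $p\le E_k(\sigma)\le \frac{p(p+1)}{2}<p^2$. Here the primality hypothesis enters through $p>1$. Thus $m:=E_k(\sigma)$ satisfies the hypotheses of Lemma~\ref{lem:fallingvaluation}. That lemma yields $v_p\bigl(E_k^{\,\underline{p}}\bigr)=v_p\bigl(m!/(m-p)!\bigr)=1$. In particular each factor is a nonzero integer, so the additivity step above is legitimate.

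There is no genuine obstacle, since the two preceding lemmas carry all the content. The only point needing care is to confirm that $E_k(\sigma)\ge p$ holds for \emph{every} $k=1,\dots,N-1$ and not merely for $k\le N-1$ up to some shift. This is the defining condition of $\Phi_p$ ($T_k\ge0$), and it makes every factor in the product both nonzero and within the range of Lemma~\ref{lem:fallingvaluation}. The upper bound $E_k\le v^{\mathrm m}_k$ uses $\delta_k(\sigma)\ge0$, which holds because the last $k$ values of any permutation sum to at most the sum of the $k$ largest values $N,N-1,\dots,N-k+1$. Lemma~\ref{lem:Erange} has already recorded this.
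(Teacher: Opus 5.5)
Your proposal is correct and matches the paper's proof. The paper uses the same two steps: additivity of $v_p$ over the $N-1$ factors, then Lemma~\ref{lem:Erange} to place each $E_k(\sigma)$ in $[p,p^2)$, so that Lemma~\ref{lem:fallingvaluation} gives valuation exactly $1$ per factor. Your explicit justification that $\delta_k(\sigma)\ge 0$ (the last $k$ entries of $\sigma$ sum to at most $N+(N-1)+\dots+(N-k+1)$) is a useful detail that the paper only asserts.
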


\begin{proof}
By Lemma~\ref{lem:Erange}, each $E_k(\sigma)$ satisfies $p\le E_k(\sigma)<p^2$, so
Lemma~\ref{lem:fallingvaluation} gives $v_p\bigl(E_k(\sigma)^{\,\underline{p}}\bigr)=1$
for every $k=1,\dots,N-1$. Since $v_p$ is additive over products, $v_p(c(\sigma,p)) =
\sum_{k=1}^{N-1}v_p\bigl(E_k(\sigma)^{\,\underline{p}}\bigr) = N-1$, as claimed.
\end{proof}

\begin{lemma}\label{lem:Wvaluation}
Let $W:=\prod_{k=0}^{N-1}k!=\mathrm{Wronsk}(1,x,\dots,x^{N-1})$ (Lemma~\ref{lem:wronskian}).
Then $v_p(W) = p$.
\end{lemma}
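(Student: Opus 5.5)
We compute $v_p(W)$ directly from Legendre's formula, using that $N=2p$ keeps every factorial argument below $p^2$. Since $v_p$ is additive over products,
\[
v_p(W)\;=\;\sum_{k=0}^{N-1} v_p(k!)\;=\;\sum_{k=0}^{2p-1} v_p(k!).
\]
For $0\le k\le 2p-1$ we have $k<p^2$ whenever $p\ge2$ (indeed $2p-1<p^2$), so Legendre's formula $v_p(k!)=\sum_{i\ge1}\lfloor k/p^i\rfloor$ reduces to its first term, $v_p(k!)=\lfloor k/p\rfloor$. This is the same truncation already used in the proof of Lemma~\ref{lem:fallingvaluation}.

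Next we split the range of $k$ into two blocks. For $0\le k\le p-1$ we get $\lfloor k/p\rfloor=0$. For $p\le k\le 2p-1$ we get $\lfloor k/p\rfloor=1$. The second block contains exactly $p$ integers, so
\[
v_p(W)=0\cdot p+1\cdot p=p.
\]

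No real obstacle arises. The only point needing care is checking that the exponents $i\ge2$ in Legendre's formula vanish, i.e.\ $2p-1<p^2$. This is equivalent to $(p-1)^2>0$ and holds for every prime $p$.

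As a sanity check, for $p=2$ we have $W=0!\,1!\,2!\,3!=12$, and $v_2(12)=2$. For $p=3$ we have $W=1\cdot1\cdot2\cdot6\cdot24\cdot120$, and its $3$-adic valuation is $1+1+1=3$.
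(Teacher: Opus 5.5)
Your proof is correct and follows the paper's argument essentially verbatim. Both use additivity of $v_p$, truncate Legendre's formula to $\lfloor k/p\rfloor$ because $k\le 2p-1<p^2$, and count the $p$ values $p\le k\le 2p-1$ that each contribute $1$ (your checks at $p=2,3$ are also right).
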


\begin{proof}
For $0\le k\le p-1$, $v_p(k!)=0$ since $k<p$. For $p\le k\le N-1=2p-1$, we have $k<p^2$
(as $2p-1<p^2\iff(p-1)^2>0$, true for $p\ge2$), so Legendre's formula gives
$v_p(k!)=\lfloor k/p\rfloor=1$. Exactly $p$ of the $N=2p$ values of $k$ fall in this
upper range, each contributing $1$ and the rest contributing $0$, so $v_p(W)=p$.
\end{proof}

\begin{theorem}\label{thm:lowerbound}
For every prime $p$,
\[
v_p\bigl(\mathrm{const}(p)\bigr) \;\geq\; p-1.
\]
\end{theorem}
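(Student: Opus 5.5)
The plan is to combine the closed formula \eqref{eq:closedform} of Claim~\ref{claim:closedform} with the valuation computations in Lemmas~\ref{lem:cvaluation} and~\ref{lem:Wvaluation}.

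First, rewrite Claim~\ref{claim:closedform} as
\[
\mathrm{const}(p)\;=\;\frac{1}{W}\sum_{\sigma\in\Phi_p}(-1)^\sigma\,c(\sigma,p),
\qquad W=\prod_{k=0}^{N-1}k!,
\]
which expresses $\mathrm{const}(p)$ as a rational number. Since $\mathrm{const}(p)\neq 0$ by Claim~\ref{claim:positivity}, its valuation is defined, and
\[
v_p\bigl(\mathrm{const}(p)\bigr)=v_p\Bigl(\sum_{\sigma\in\Phi_p}(-1)^\sigma c(\sigma,p)\Bigr)-v_p(W).
\]
Alternatively, nonvanishing follows from the strictly positive values in the paper's tables for $p\le14$. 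If one prefers not to rely on nonvanishing at all, one can simply adopt the convention $v_p(0)=+\infty$, and the bound holds trivially in that case.

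Next, bound the numerator from below with the finite-sum ultrametric inequality from Notation~\ref{not:padic}. Every summand $(-1)^\sigma c(\sigma,p)$ has $p$-adic valuation exactly $2p-1$ by Lemma~\ref{lem:cvaluation}, because the sign does not affect the valuation. Hence the sum has valuation at least $2p-1$ (or the sum is zero).

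Finally, subtract $v_p(W)=p$, given by Lemma~\ref{lem:Wvaluation}. This yields $v_p(\mathrm{const}(p))\ge 2p-1-p=p-1$.

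There is no real obstacle here, since all the heavy lifting (the uniform bound $p\le E_k<p^2$ and the resulting Legendre computations) is already done in Lemmas~\ref{lem:Erange}–\ref{lem:Wvaluation}. The only points needing care are that $v_p$ is additive on quotients of nonzero rationals and that the ultrametric bound gives an inequality, not an equality. The inequality is exactly why the result is only a lower bound; upgrading it to the observed equality would require showing that $\sum_{\sigma}(-1)^\sigma c(\sigma,p)/p^{2p-1}\not\equiv0\pmod p$, which this argument does not attempt.
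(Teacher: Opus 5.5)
Your proposal is correct and follows the paper's proof essentially verbatim: the closed formula for $\mathrm{const}(p)\cdot W$, the uniform valuation $2p-1$ of every summand $c(\sigma,p)$ from Lemma~\ref{lem:cvaluation}, the finite-sum ultrametric inequality, and subtraction of $v_p(W)=p$. Your extra remark on nonvanishing (via Claim~\ref{claim:positivity} or the convention $v_p(0)=+\infty$) is a small point the paper leaves implicit, since its Notation~\ref{not:padic} defines $v_p$ only on nonzero rationals.
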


\begin{proof}
By Claim~\ref{claim:closedform},
\[
\mathrm{const}(p)\cdot W \;=\; \sum_{\sigma\in\Phi_p}(-1)^\sigma\,c(\sigma,p).
\]
By Lemma~\ref{lem:cvaluation}, every summand on the right satisfies
$v_p\bigl((-1)^\sigma c(\sigma,p)\bigr)=2p-1$, the sign not affecting the valuation.
By the ultrametric inequality (Notation~\ref{not:padic}),
\[
v_p\!\left(\sum_{\sigma\in\Phi_p}(-1)^\sigma\,c(\sigma,p)\right)\;\geq\;
\min_{\sigma\in\Phi_p} \left\{v_p\bigl((-1)^\sigma c(\sigma,p)\bigr)\right\} \;=\; 2p-1.
\]
Thus $v_p(\mathrm{const}(p)\cdot W)\ge 2p-1$, and by Lemma~\ref{lem:Wvaluation},
$v_p(\mathrm{const}(p)) = v_p(\mathrm{const}(p)\cdot W) - v_p(W) \ge (2p-1)-p = p-1$,
as claimed.
\end{proof}

One feature of Table~\ref{tab:valuations} is immediately apparent: the prime factors of $p$ appear in $\mathrm{const}(p)$ with notably high valuation. The dominant contributions to $v_q(\mathrm{const}(p))$ arise precisely when $q \mid p$. Particularly, for every prime $p$ in our range,
the entry $v_p(\mathrm{const}(p))$ equals exactly $p - 1$.

\begin{conjecture}\label{conj:valuation}
For every prime $p$,
\[
    v_p\bigl(\mathrm{const}(p)\bigr) \;=\; p - 1.
\]
\end{conjecture}

\begin{remark}
Theorem~\ref{thm:lowerbound} establishes half of Conjecture~\ref{conj:valuation}
unconditionally for every prime $p$. Writing $\mathrm{const}(p)=p^{p-1}\cdot S_p$ for the (now
guaranteed) integer $S_p:=\mathrm{const}(p)/p^{p-1}$, the remaining content of the
conjecture is the single statement $p\nmid S_p$, i.e.\ the absence of any further
cancellation modulo $p$ in the alternating sum of Claim~\ref{claim:closedform} beyond
what is accounted for above. 
\end{remark}

\section{The constants \texorpdfstring{$D(p)$ and $\overline D(p)$}{D(p) and Dbar(p)}}
\label{sec:Dp}

\noindent Recall the Wronskian factor $W=\prod_{k=1}^{N-1}k!=\prod_{k=0}^{N-1}k!$ (Lemma~\ref{lem:wronskian}), and set
\[
D(p) \;:=\; \frac{c(p)\cdot W}{(p!)^{N-1}}, \qquad \overline D(p) \;:=\; \frac{D(p)}{W} \;=\; \frac{c(p)}{(p!)^{N-1}}.
\]
Direct computation from the exact values of $c(p)=\mathrm{const}(p)$ shows $D(p)$ is a positive integer for every $p\le14$ (full list in Appendix~\ref{app:Dvalues}), growing to a 269-digit integer at $p=14$. In fact $D(p)>c(p)$ throughout this range, with the gap widening rapidly ($D(14)/c(14)\approx3.2\times10^{27}$) - this is a direct consequence of $W$ growing faster than $(p!)^{N-1}$.

The reduced constant $\overline D(p)$ decays to $0$ super-exponentially ($\overline D(1)=1,\ \overline D(2)=1/4,\ \ldots,\ \overline D(14)\approx3.01\times10^{-55}$). Figure~\ref{fig:Dpfits} shows $\log D(p)$ alongside $\log c(p)$ for reference, and the growth of $-\log\overline D(p)$ together with its factorial fit ($-\log\overline D(p)\approx0.1724\,p\log(p!)-1.1764\log((2p)!)+5.6937\log(p!)+1.1956$, $\max|\mathrm{resid}|=6.0\times10^{-3}$).
\begin{figure}[H]
\centering
\includegraphics[width=\linewidth]{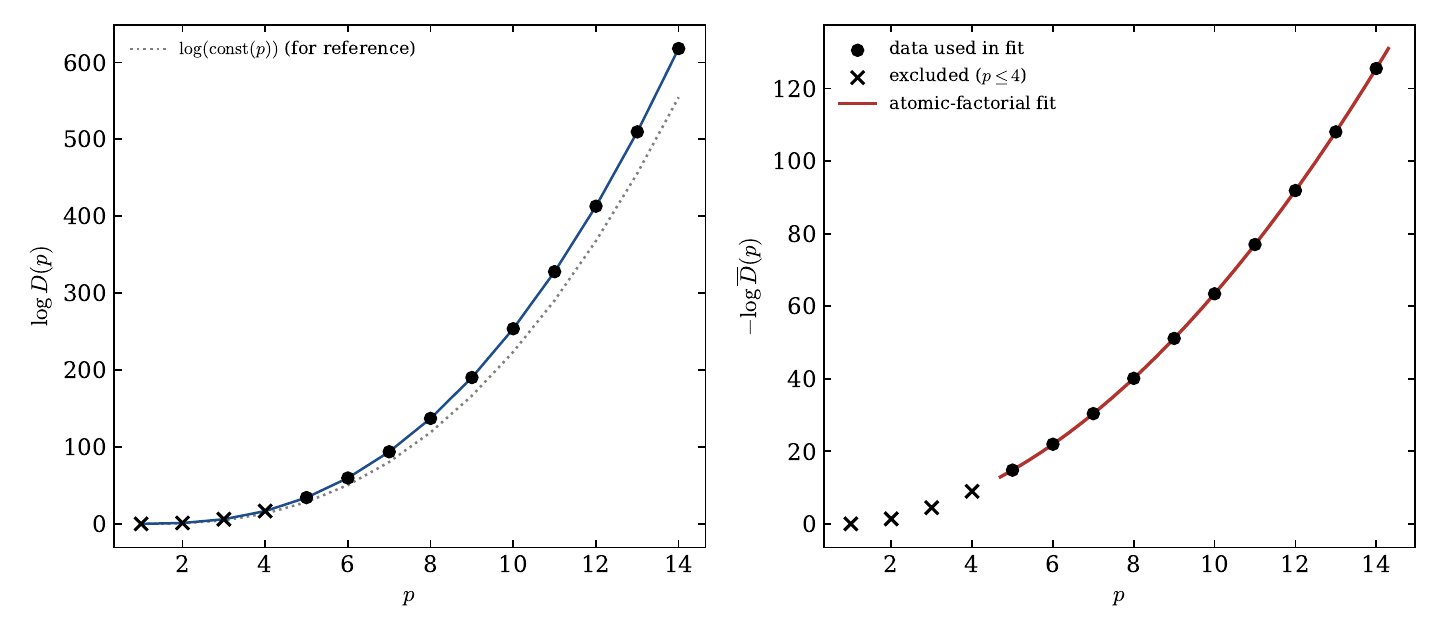}
\caption{Left: $\log D(p)$ (solid) compared with $\log(\mathrm{const}(p))$ (dotted) for reference. Right: $-\log\overline D(p)$ together with its factorial least-squares fit; both confirm that $\overline D(p)\to0$ super-exponentially. Excluded points $p\le4$ are marked with a cross.}
\label{fig:Dpfits}
\end{figure}

\section*{Acknowledgements}
\noindent We thank R.~Buring and O.~Zaboronski for fruitful collaboration and discussions. The first author thanks the Center for Information Technology of the University of Groningen for access to the High Performance Computing cluster, H\'abr\'ok, and 
K.~C.\ Waggeveld for performing the computations thereon. The second author thanks the organisers of the Prague Mathematical Physics Seminar (at the Charles University), where this result was presented on 7 May 2026, for helpful discussion, 
partial financial support, and hospitality. The second author thanks M.~Kontsevich and the audience of the Mathematics seminar at the IH\'ES (13 May 2026, Bures\/-\/sur\/-\/Yvette, France) for stimulating discussions. The first author thanks the organisers of \textsc{Group36} workshop on group theoretic methods in Physics (13--17 July 2026, Valladolid, Spain) for a warm atmosphere during the meeting. This work has been partially supported by 
the Bernoulli Institute (Groningen, NL) via project 110135.

\printbibliography
\appendix
\section{Proof of Theorem \ref{thm:main}}
\label{appendixa}
The proof relies on the following two lemmas and the corollary
they imply.
\begin{lemma}[Antisymmetry]\label{lem:antisym}
The alternating composition satisfies, for any $a \neq b$,
\[
  \mathcal{A}_p[\dots, w_a, \dots, w_b, \dots](f)
  \;=\;
  -\,\mathcal{A}_p[\dots, w_b, \dots, w_a, \dots](f).
\]
\end{lemma}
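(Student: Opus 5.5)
The plan is a change of summation variable. Let $\tau_{ab}\in S_N$ be the transposition exchanging $a$ and $b$. Swapping $w_a$ and $w_b$ in the argument list gives the tuple $w'=(w'_1,\dots,w'_N)$ with $w'_j=w_{\tau_{ab}(j)}$. By definition \eqref{eq:opdef},
\[
\mathcal{A}_p[\dots,w_b,\dots,w_a,\dots](f)=\sum_{\sigma\in S_N}(-1)^\sigma\, w_{\tau_{ab}\sigma(1)}\partial_x^p\circ\cdots\circ w_{\tau_{ab}\sigma(N)}\partial_x^p\,(f).
\]
Here the weight occupying the $k$-th slot of the composition is $w'_{\sigma(k)}=w_{(\tau_{ab}\circ\sigma)(k)}$.

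Next I reindex by $\rho:=\tau_{ab}\circ\sigma$. Left multiplication by a fixed element is a bijection $S_N\to S_N$, so as $\sigma$ runs over $S_N$, so does $\rho$. The sign is multiplicative and $(-1)^{\tau_{ab}}=-1$, hence $(-1)^\sigma=(-1)^{\tau_{ab}^{-1}\rho}=-(-1)^\rho$. Substituting gives
\[
\mathcal{A}_p[\dots,w_b,\dots,w_a,\dots](f)=-\sum_{\rho\in S_N}(-1)^\rho\, w_{\rho(1)}\partial_x^p\circ\cdots\circ w_{\rho(N)}\partial_x^p\,(f)=-\mathcal{A}_p[\dots,w_a,\dots,w_b,\dots](f).
\]

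Each summand is a fixed composition of differential operators applied to $f$, and the sum is finite. The reindexing is therefore legitimate without any analytic care, and $f$ plays no role beyond being a common argument.

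The only step that needs real care is the bookkeeping. One must confirm that swapping the inputs corresponds to composing $\tau_{ab}$ on the left of $\sigma$, not on the right, under the paper's one-line convention. Either order would work, since $\sigma\mapsto\sigma\tau_{ab}$ is also a bijection with the same sign effect, but stating the correct one keeps the argument clean. I do not expect any genuine obstacle.
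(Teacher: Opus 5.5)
Your proof is correct and is essentially the paper's argument: reindex the sum over $S_N$ by composing with the transposition $\tau=(a\ b)$, which is a bijection of $S_N$ that flips the sign. Your left composition $\tau_{ab}\circ\sigma$ is the accurate bookkeeping under the one-line convention; the paper writes $\sigma\circ\tau$, which under standard composition swaps the slots $a,b$ rather than the weights $w_a,w_b$. For the same reason, drop your aside that ``either order would work'': right multiplication is also a sign-flipping bijection, but it does not turn the swapped-weight summand into the original one.
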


\begin{proof}
Compose each summation index $\sigma$ in~\eqref{eq:opdef} with the
transposition $\tau = (a\ b)$.
As $\sigma$ ranges over all of $S_N$, so does $\sigma\circ\tau$,
and $(-1)^{\sigma\circ\tau} = -(-1)^\sigma$.
Simultaneously, composing with $\tau$ interchanges the labels $a$
and $b$ throughout, which is equivalent to swapping $w_a$ and $w_b$
in the weight list.
Relabelling $\sigma \mapsto \sigma\circ\tau$ in the sum thus
changes its sign, proving the claim.
\end{proof}

\begin{lemma}[Derivative count]\label{lem:count}
In every monomial produced by the full Leibniz expansion of any
summand of $\mathcal{A}_p[w_1,\dots,w_N](f)$, the total number of
$x$-derivatives is exactly
\[
  N \cdot p \;=\; 2p^2.
\]
\end{lemma}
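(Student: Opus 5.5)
The count is preserved by the Leibniz rule, so I would prove it by induction on the number of operators, reading each summand of \eqref{eq:opdef} from right to left.

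Fix $\sigma\in S_N$ and consider the single summand
\[
w_{\sigma(1)}\partial_x^p\circ w_{\sigma(2)}\partial_x^p\circ\cdots\circ w_{\sigma(N)}\partial_x^p\,(f).
\]
The basic observation is that for smooth functions $g,h$ and any $m\ge 0$,
\[
\partial_x^m(gh)=\sum_{j=0}^{m}\binom{m}{j}\,\partial_x^j g\;\partial_x^{m-j}h.
\]
So each monomial produced by expanding $\partial_x^m(gh)$ carries exactly $m$ more derivatives than the product $gh$. More generally, if $u$ is a sum of monomials in the derivatives of $f,w_1,\dots,w_N$, each with exactly $d$ derivatives in total, then $\partial_x^p(u)$ is a sum of monomials with exactly $d+p$ derivatives. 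Multiplying by the undifferentiated weight $w_{\sigma(j)}$ adds none.

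\textbf{Induction.} I would induct on $k$, the number of operators applied so far. Define
\[
u_k:=w_{\sigma(N-k+1)}\partial_x^p\circ\cdots\circ w_{\sigma(N)}\partial_x^p\,(f),
\qquad u_0=f.
\]
The claim is that after full Leibniz expansion, every monomial of $u_k$ has exactly $kp$ derivatives in total.

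\textbf{Base case.} For $k=0$, the function $f$ is a single monomial with $0$ derivatives.

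\textbf{Inductive step.} We have $u_{k+1}=w_{\sigma(N-k)}\,\partial_x^p(u_k)$. By the observation above, applying $\partial_x^p$ raises the count from $kp$ to $kp+p$, and multiplying by the weight leaves it unchanged.

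\textbf{Conclusion.} Taking $k=N$ gives exactly $Np=2p^2$ derivatives in every monomial of every summand. Since $\sigma$ was arbitrary, this holds for all of $\mathcal{A}_p[w_1,\dots,w_N](f)$.

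The argument is elementary. The only real obstacle is making the inductive statement precise, namely that ``monomial'' means a product of derivatives $\partial_x^{j}$ of the symbols $f,w_1,\dots,w_N$ together with a numerical coefficient, and that the statement concerns every term \emph{before} like terms are collected. The count is a grading, homogeneous of degree $p$ for each operator $w\,\partial_x^p$, so cancellations in the alternating sum cannot affect it.
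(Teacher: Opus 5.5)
Your proof is correct and uses the same idea as the paper. Each of the $N$ factors $w_{\sigma(j)}\partial_x^p$ contributes exactly $p$ derivatives, and the Leibniz rule only redistributes them, so every monomial carries $Np=2p^2$ derivatives; your right-to-left induction on $u_k$ just makes the paper's one-line argument precise.
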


\begin{proof}
Each summand in~\eqref{eq:opdef} is the composition of $N = 2p$
operators, each contributing exactly $p$ applications of $\partial_x$.
The Leibniz rule redistributes these derivatives among the factors in every term, but preserves the total count. Thus every monomial in the expansion carries exactly $Np = 2p^2$ derivatives.
\end{proof}

\begin{corollary}[Order of $\mathcal{A}_p$]\label{cor:orderp}
The operator $\mathcal{A}_p[w_1,\dots,w_N]$ has differential order exactly
$p$ in $f$.
That is, every surviving term in the expansion of~\eqref{eq:opdef}
carries the factor $\partial_x^p(f)$, and no term $\partial_x^k(f)$
with $k \neq p$ appears with non-zero coefficient.
\end{corollary}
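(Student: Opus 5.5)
The plan is to combine Lemma~\ref{lem:antisym} with Lemma~\ref{lem:count}, together with a separate lower bound on the order of $f$. First, expand every summand of~\eqref{eq:opdef} by the Leibniz rule. This writes $\mathcal{A}_p[w_1,\dots,w_N](f)$ as $\sum_k C_k\,\partial_x^k f$, where each coefficient $C_k$ is a universal integer linear combination of products $w_1^{(d_1)}w_2^{(d_2)}\cdots w_N^{(d_N)}$. Each such product is multilinear in the weights, with exactly one factor per $w_j$.

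\textbf{Lower bound $k\ge p$.} In every composition $w_{\sigma(1)}\partial_x^p\circ\cdots\circ w_{\sigma(N)}\partial_x^p(f)$, the innermost operator already produces $\partial_x^p f$. The outer operators only multiply by weights and differentiate further, so the Leibniz rule never lowers the order on $f$. Hence $C_k=0$ for $k<p$.

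\textbf{Upper bound $k\le p$.} By Lemma~\ref{lem:count}, in a term carrying $\partial_x^k f$ the weights absorb exactly $2p^2-k$ derivatives, so $d_1+\cdots+d_N=2p^2-k$. By Lemma~\ref{lem:antisym}, $C_k$ is totally antisymmetric in $(w_1,\dots,w_N)$. I will therefore group the monomials of $C_k$ by the multiset $\{d_i\}$ of derivative orders. Antisymmetry forces each group to be a multiple of the antisymmetrization $\sum_{\tau}(-1)^\tau\prod_i w_{\tau(i)}^{(d_i)}=\det\bigl[w_j^{(d_i)}\bigr]$. This determinant vanishes whenever two of the $d_i$ coincide, because two of its rows are then equal. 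So every surviving group has pairwise distinct orders $d_i$. Their sum is then at least $0+1+\cdots+(N-1)=p(2p-1)=2p^2-p$, which gives $2p^2-k\ge 2p^2-p$, i.e.\ $k\le p$.

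Putting the two bounds together, only $k=p$ survives, which proves the corollary. As a bonus, when $k=p$ the distinct orders must be exactly $\{0,1,\dots,N-1\}$, so $C_p$ is a multiple of $\det[w_j^{(i)}]=\mathrm{Wronsk}(w_1,\dots,w_N)$, which is Theorem~\ref{thm:main}.

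The step needing the most care is the claim that a totally antisymmetric, multilinear, universal-integer-coefficient differential polynomial decomposes into determinants $\det[w_j^{(d_i)}]$. I will prove it by applying the antisymmetrizer $\frac{1}{N!}\sum_\tau(-1)^\tau\tau$ to $C_k$. This operator fixes $C_k$, since $C_k$ is already antisymmetric. It sends each monomial to $\pm\frac{1}{N!}\det[w_j^{(d_i)}]$, after reordering the factors so that the derivative orders are listed by position.
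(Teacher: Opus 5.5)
Your proposal is correct and follows the paper's own proof. The lower bound comes from the innermost $\partial_x^p f$, and the upper bound combines the derivative count of Lemma~\ref{lem:count} with antisymmetry, which forces pairwise distinct weight orders. Your antisymmetrizer/determinant step is a more explicit version of the paper's observation that the swap $w_a\leftrightarrow w_b$ with $k_a=k_b$ fixes the monomial while negating its coefficient.
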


\begin{proof}
Let $k$ denote the differential order of $f$ in a given term of the
full Leibniz expansion; so the term contains the factor $\partial_x^k(f)$.

\smallskip
\noindent\emph{Lower bound $k \geq p$.}
In each summand of~\eqref{eq:opdef}, the innermost operation is
$w_{\sigma(N)}\partial_x^p(f)$, which already contains $\partial_x^p(f)$.
Every subsequent application of $\partial_x^p$ from the left acts on
an expression involving $\partial_x^p(f)$ via the Leibniz rule:
for any smooth $g$ and $m \geq 0$,
\[
  \partial_x^m\!\left(g\cdot\partial_x^p(f)\right)
  = \sum_{l=0}^{m}\binom{m}{l}\partial_x^l(g)\cdot\partial_x^{p+m-l}(f),
\]
so the order of $f$ in each resulting term is $p + m - l \geq p$.
Proceeding inductively through all $N$ composition steps,
every monomial satisfies $k \geq p$.

\smallskip
\noindent\emph{Upper bound $k \leq p$.}
We can write the $2p^2$ derivatives of Lemma~\ref{lem:count} as $k$ acting
on $f$ and $2p^2 - k$ acting on the weight functions, with
non-negative orders $k_1, \dots, k_N$ ($k_i$ derivatives on $w_i$) satisfying
$\sum_{i=1}^N k_i = 2p^2 - k$.

By Lemma~\ref{lem:antisym}, if $k_a = k_b$ for some $a \neq b$, then swapping $w_a$ and $w_b$
maps the monomial to itself while negating its coefficient, forcing
that coefficient to zero. Thus every term with non-zero coefficient
has \emph{pairwise distinct} values $k_1, \dots, k_N$.

Since $k_1, \dots, k_N$ are $N = 2p$ pairwise distinct non-negative
integers,
\begin{equation}\label{eq:lb}
  \sum_{i=1}^{N} k_i
  \;\geq\;
  0 + 1 + 2 + \cdots + (N-1)
  \;=\;
  \frac{N(N-1)}{2}
  \;=\;
  p(2p-1).
\end{equation}
Thus
\[
  2p^2 - k \;=\; \sum_{i=1}^{N}k_i \;\geq\; p(2p-1) = 2p^2 - p,
\]
which gives $k \leq p$.

Combining the two bounds yields $k = p$.
\end{proof}

\begin{remark}\label{rem:budget}
The lower bound in~\eqref{eq:lb} is tight: equality holds if and only
if $\{k_1, \dots, k_N\} = \{0, 1, \dots, N-1\}$.
The budget for weight derivatives is exactly $2p^2 - p = p(2p-1)$,
matching this minimum precisely.
Any other profile of pairwise distinct non-negative integers
- for instance $\{0,1,2,3,4,6\}$ when $N = 6$, $p = 3$, requiring
$0+1+2+3+4+6 = 16 > 3 \cdot 5 = 15$ derivatives - exceeds the
available budget and cannot contribute.
\end{remark}

\begin{proof}[Proof of Theorem~\ref{thm:main}]
By Corollary~\ref{cor:orderp}, every surviving term in the expansion of\\ $\mathcal{A}_p[w_1,\dots,w_N](f)$ has the form
\[
  C_{\mathbf{k}}\cdot\prod_{i=1}^{N}\partial_x^{k_i}(w_i)\cdot\partial_x^p(f),
\]
where $C_{\mathbf{k}}$ is a numerical coefficient and
$\mathbf{k} = (k_1, \dots, k_N)$ is the order profile on the weights.

As shown in the proof of Corollary~\ref{cor:orderp}, antisymmetry
forces each surviving profile to have pairwise distinct $k_i$.
With $k = p$ fixed, the derivative budget on the weights is
$2p^2 - p = p(2p-1)$, which by Remark~\ref{rem:budget} is
exactly the minimum achievable by $N$ pairwise distinct non-negative
integers. Thus equality must hold in~\eqref{eq:lb}, and the
\emph{unique} surviving profile is
\[
  \{k_1, \dots, k_N\} \;=\; \{0, 1, \dots, N-1\}.
\]
The alternating sum over all assignments of this profile to the weight
functions is, by definition, the Wronskian:
\[
  \Wronsk(w_1,\dots,w_N)
  \;:=\;
  \det\!\bigl[\partial_x^{j-1}(w_i)\bigr]_{1\leq i,j\leq N}
  \;=\;
  \sum_{\sigma\in S_N}(-1)^\sigma\prod_{i=1}^{N}\partial_x^{\sigma(i)-1}(w_i).
\]
The coefficient $C_{\mathbf{k}}$ arises purely from the combinatorics
of the iterated Leibniz expansion and depends only on $p$, not on $f$
or on the weight functions. Denoting this factor by
$\mathrm{const}(p)$, we conclude
\[
  \mathcal{A}_p[w_1,\dots,w_N](f)
  \;=\;
  \mathrm{const}(p)\cdot\Wronsk(w_1,\dots,w_N)\cdot\partial_x^p(f). 
\]
\end{proof}
\section{Values of $\mathrm{const}(p)$ up to $p=14$}
\label{sec:allvalues}
Here we present the known values of $\mathrm{const}(p)$ up to $p = 14$,
together with their prime factorizations.

\smallskip
\noindent The prime factorisations below were obtained with Dario Alpern's Integer Factorisation Calculator (ECM/SIQS)~\cite{alpertron}, which in our usage reliably extracts prime factors up to about 30 decimal digits. For each $\mathrm{const}(p)$, once no further factor below this threshold could be found, the remaining cofactor was not factored further; instead we subjected it to a probabilistic primality test using bigprimes.org~\cite{bigprimesorg}, which applies the Baillie-PSW test together with 7 rounds of the Miller-Rabin test for numbers exceeding 64 bits. A cofactor that passes this test is reported below as a single (unfactored) integer, and should be understood as only \emph{probably} prime rather than proven prime; we have not attempted a deterministic primality proof (e.g.\ by ECPP) for these cofactors. The sole exception is the large cofactor appearing in the factorisation of $\mathrm{const}(13)$, which fails the probabilistic test and is accordingly labelled `(Composite)' without being split further.

\bigskip
\noindent\rule{\linewidth}{0.4pt}
\smallskip
\small

\constentry{1}{2}{1}{1}

\constentry{2}{4}{2}{2}

\constentry{3}{6}{90}{$2\cdot3^2\cdot5$}

\constentry{4}{8}{586656}{$2^5\cdot 3^3\cdot 7\cdot 97$}

\constentry{5}{10}{1915103977500}{$2^2 \cdot3 \cdot 5^4 \cdot 7 \cdot 79 \cdot 103 \cdot 4483$}

\constentry{6}{12}{7886133184567796056800}{$2^5 \cdot 3^6 \cdot 5^2 \cdot 11 \cdot 23 \cdot 223 \cdot 239 \cdot 1002 820739$}

\constentry{7}{14}{85873408332103907284746052081828368}{ $2^4 \cdot 3^2 \cdot 7^6 \cdot 13 \cdot 29 \cdot 1361 \cdot 2928 279181 \cdot 3 373618 837829$}

\constentry{8}{16}{4594491123326092088701002220876785865521537220214784}{$2^{17} \cdot 3^2 \cdot 7^5 \cdot 13 \cdot 269 \cdot 9661 \cdot 4 772558 055911 \cdot 1437 228585 673967 071537$}

\constentry{9}{18}{2059560342372549855311520646192231857828658156995213197895311773186910208}{$2^{11} \cdot 3^{14} \cdot 7^2 \cdot 11^2 \cdot 13 \cdot 17 \cdot 43 \cdot 151 \cdot 2 536393 \cdot 1193 919540 273283 \cdot 8 160866 422780 307366 231285 008303$ (31 digits)}

\constentry{10}{20}{12366585616687922175229690518161006581890827598464384763435846689194555321926721635186969600000000}{$2^{16} \cdot 3^6 \cdot 5^8 \cdot 19 \cdot 2318 471913 717203 \cdot 3489 987569 680376 097511 $\\$\cdot 4310 255301 386067 349690 402472 139980 447858 870477$ (46 digits)}

\constentry{11}{22}{1512170566890529388470809077236613950878090470607103334738020704327297913201742894343506377843406342016226367608547680866000000}{ $2^7 \cdot 3^5 \cdot 5^6 \cdot 11^{10} \cdot 13 \cdot 19 \cdot 25457 \cdot 1 304111 \cdot 19 027007 \cdot 277 381877 879789 \cdot 2724 104292 276785 198723 \cdot 1017 528619 742366 754559 096610 489525 888417 939019 147331$ (52 digits)}

\constentry{12}{24}{5498204731735819824932764431577828853091535075963690476426743279778624624056195101600179165694626009665228817515586720358929229075113647839992833114429030400000}{$2^{18} \cdot 3^{11} \cdot 5^5 \cdot 11^8 \cdot 23 \cdot 6240 374221 \cdot 4 156745 629145 249267 \cdot 150087 192783 934271 953579 \cdot 1 973877 182896 509567 154686 476569 916604 851423 966617 859790 944299 941672 435839 359560 447739$ (85 digits)}

\constentry{13}{26}{839630392364690485033794364133039282555694250991715962816289209038314705544359344129440199511545622749161083016799037230605187931570735637773841615964174834585398693013887219584559285780585126502400}{$2^{13}\cdot3^8\cdot5^2\cdot11^6\cdot13^{12}\cdot17\cdot97\cdot337\cdot27243397385914871724447417691488851104\\57471322785266427208450071853005505003901740433232104186796429270064993602844733145\\8925288320887246456354056141933129121535401$ (164 digits) (Composite)}

\constentry{14}{28}{7398040169232585459350131400876616433108456563542326671820170467323437649015925253611993741824016395372908844522732645092417947941972247942351664804568139456872774721959807686464898490851546742468976729501727604943676101572611708283345305600}{$2^{17}\cdot3^3\cdot5^2\cdot7^{13}\cdot11^4\cdot13^{11}\cdot32891285673563672246003165608072510232318699\\77405135791308188235300055952985149062873180268302320181032892587194456510949295544\\6073147116264586137549690490817951980312345150293192647954551600837443915441259$ (206 digits)}

\normalsize

\section{Values of $D(p)$ up to $p=14$}
\label{app:Dvalues}

\noindent Here we tabulate the exact values of the auxiliary integer $D(p)$ (\S\ref{sec:Dp}), defined by $D(p) = \mathrm{const}(p)\cdot W/(p!)^{N-1}$ with $N=2p$ and $W=\prod_{k=1}^{N-1}k!$.

\bigskip
\noindent\rule{\linewidth}{0.4pt}
\smallskip
\small

\dentry{1}{2}{1}

\dentry{2}{4}{3}

\dentry{3}{6}{400}

\dentry{4}{8}{16041375}

\dentry{5}{10}{681053872728096}

\dentry{6}{12}{77758114387378156351627392}

\dentry{7}{14}{50282641110972993477580343756906229399552}

\dentry{8}{16}{343157806261282476591438787344855377532318432677301031527375}

\dentry{9}{18}{42223612607128022427120264390241431004508083719691021491054250153037118519280000000}

\dentry{10}{20}{149947853037782193993631030647104952918725205895602925886651767593663381180948805338711916412142453101148749824}

\dentry{11}{22}{23387688366898355405774901194352041197121294755161554731078563089425366689062928236233917164484632968652160269181879258218085056918584924569600}

\dentry{12}{24}{234092958692567435267567612995154846092832846049300128958763346868018256895439024455278179218144715860090755804299829586277766866586228653803534930390084262752539591620885158400000}

\dentry{13}{26}{212495240502868440365595901979832626678808135660756986257865427791856362728008911094411915209979327655795531969686388207399690548309749245632129410868653691092463268188091692016418350834282640515523187682743366451200000000}

\dentry{14}{28}{24041522289912242513820465051878116815132836110768798151462541478965187632793330706007079082062139885673821124016196800742177030484089078426807160325958273054580787456334494337568049521229191175845512069960827975261580611786245976237740777845381002690560000000000000000}

\normalsize

\end{document}